\pgfplotsset{compat=newest}
\renewcommand{\bar}{\overline}
\newcommand{\R}{\mathbb{R}}
\newcommand{\D}{\nabla}
\newcommand{\pa}{\partial}
\newcommand{\bn}{\mathbf{n}}
\newcommand{\bp}{\mathbf{p}}
\newcommand{\bq}{\mathbf{q}}
\newcommand{\bt}{\mathbf{t}}
\newcommand{\cM}{\mathcal{M}}
\newcommand{\cH}{\mathcal{H}}
\newcommand{\ent}{\mathrm{Ent}}
\newcommand{\al}{\alpha}
\newcommand{\ga}{\gamma}
\newcommand{\ka}{\kappa}
\newcommand{\vare}{\varepsilon}
\renewcommand{\th}{\theta}
\newtheorem{theorem}{Theorem}[section]
\newtheorem{corollary}[theorem]{Corollary}
\newtheorem{lemma}[theorem]{Lemma}
\newtheorem{proposition}[theorem]{Proposition}
\theoremstyle{remark}
\theoremstyle{definition}
\title{On bounds of entropy and total curvature for ancient curve shortening flows}
\author{Wei-Bo Su}
\address{WS: Mathematics Division, National Center for Theoretical Sciences, Cosmology Building, No. 1, Sec. 4, Roosevelt Rd., Taipei City 106, Taiwan}
\email{weibo.su@ncts.ntu.edu.tw}
\author{Kai-Wei Zhao}
\address{KZ: Department of Mathematics, University of Notre Dame, Notre Dame, 46556, Indiana, USA}
\email{kzhao4@nd.edu }
\begin{document}

\begin{abstract}
Bounds of total curvature and entropy are two common conditions placed on mean curvature flows. 
We show that these two hypotheses are equivalent for the class of ancient complete embedded smooth planar curve shortening flows, which are one-dimensional mean curvature flows.
As an application, we give a short proof of the uniqueness of tangent flow at infinity of an ancient smooth complete non-compact curve shortening flow with finite entropy embedded in $\mathbb{R}^2$.
\end{abstract}

\maketitle 

\section{Introduction}
A family of planar curves $M_t$ is a solution to the \textit{curve shortening flow} (CSF) if there exists a smooth family of immersions $\ga: \mathbb{L}\times I \to \R^2$, where $I\subset \R$ is an interval and $\mathbb{L}  = \mathbb{R}$ or $\mathbb{S}^{1}$, such that $M_t = \ga(\mathbb{L},t)$ and satisfies the evolution equation
\begin{equation}\label{eq: CSF}
    \ga_t = \ga_{ss} = \boldsymbol{\kappa} = \kappa \bn.
\end{equation}
Throughout this paper, $s$ denotes the arc-length parameter of a planar curve $\gamma$, $\bt = \gamma_s$ is the unit tangent vector and $\bn = J\bt$ is the normal vector, where $J : \R^2 \rightarrow \R^2$ is the counterclockwise rotation by an angle of $\pi/2$, and $\kappa = \langle \gamma_{ss}, \bn \rangle$ is the (signed) curvature.
We say that a flow $\mathcal{M}=\cup_{t\in I}(M_t,t)$ is \textit{ancient} if $(-\infty,t_0)\subset I$ for some $t_0$. Simple examples of ancient flows are self-similarly shrinking circles and translating grim-reapers. See \cite{AL86, DHS10, AAAW13, AY21, AIOZ23} for more complicated examples.

We say that a curve shortening flow $\mathcal{M}$ has \emph{finite total curvature} if
\begin{align}
    \sup_{t \in I} \, \int_{M_t} \lvert \kappa \rvert \, ds < + \infty.
\end{align}
For closed curves, it is well known that the total curvature is non-increasing along the curve shortening flow according to the first variation formula \cite[Section 4.3 page 99]{Man11}:
\begin{align}
    \frac{d }{d t} \int_{M_t} \lvert \kappa \rvert \, ds = -2 \sum_{p\in M_t : \kappa(p, t) = 0} \lvert \kappa_s(p, t) \rvert \, ds.
\end{align}
Here, the set $\{p\in M_t : \kappa(p, t) = 0\}$ is finite by Sturm's theory proved by Angenent \cite{Ang:1988:ZSS} (see also Subsection~\ref{sec: 2.1} for summary).

Next, we recall Colding-Minicozzi's \textit{Entropy} \cite{CM12}.
We denote the \emph{Gaussian weighted length functional} $F_{x_0, \lambda}$ at center $x_0 \in \R^2$ and scale $\sqrt{\lambda}$ ($\lambda>0$) for a curve $\Gamma$ by
\begin{align}
    F_{x_0, \lambda}[\Gamma] = \frac{1}{\sqrt{4 \pi \lambda}}\int_\Gamma e^{-\frac{\lvert x - x_0 \rvert^2}{4\lambda}}\ ds.
\end{align}
Then the entropy of $\Gamma$ is defined as 
\begin{align}\label{eq:Entropy}
    \ent[\Gamma] = \sup_{x_0\in \R^2, \lambda> 0 } F_{x_0, \lambda}[\Gamma].
\end{align}
If $\cM = \cup_{t\in I}(M_t,t)$ is a curve shortening flow, we define the \emph{Entropy} of $\cM$ by
\begin{align}
    \ent[\cM] := \sup_{t\in I} \ent[M_t],
\end{align}
and we say that an ancient flow $\cM$ has \textit{finite-entropy} if $\ent[\cM]<+\infty$.
One important motivation to define $F$-functional and entropy is the well known monotonicity formula  found by Huisken \cite{Hu90}: for any spacetime point $X_0 = (x_0, t_0) \in \R^2\times \R$, for any $t< t_0$
\begin{align}
    \frac{d}{d t} \int_{M_t} \Phi_{X_0}(x,t)\ ds \leq -\int \left\lvert \boldsymbol{\kappa} + \frac{(x - x_0)^\perp}{2(t_0 - t)}\right\rvert^2 \Phi_{X_0}(x,t)\ ds\ .
\end{align}
Here, $\Phi_{X_0}(x, t) = \left( 4\pi (t_0 - t) \right)_+^{-\frac12} \exp\big(-\frac{\lvert x - x_0\rvert^2}{4 (t_0 - t)}\big)$ is the backward heat kernel where $(\cdot)_+ = \max\{ \cdot, 0\}$.
Note that the integral on the left-hand side of the monotonicity formula is exactly $F_{x_0, (t_0 - t)}[M_t]$.
Thus, $t \mapsto \ent[M_t]$ is non-increasing (see \cite[Lemma 1.11]{CM12} for detail.) 

For any immersed $C^2$ planar curve, finite total curvature implies finite entropy (Theorem~\ref{thm: finite entropy}). 
The converse is generally not true. For instance, the graph of $y = \sin(x)$ and the logarithmic spiral have finite entropy but infinite total curvature (see the appendix for the detail).
Nevertheless, we show that when restricting to the class of ancient complete smooth embedded curve shortening flows, these two conditions are equivalent.
\begin{theorem}\label{thm: equivalence}
    Suppose $\cM$ is an ancient complete smooth embedded curve shortening flow. Then $\cM$ has finite total curvature if and only if $\cM$ has finite entropy.
\end{theorem}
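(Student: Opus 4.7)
The forward direction is immediate from Theorem~\ref{thm: finite entropy} applied to each time slice $M_t$ and then taking the supremum in $t$; the substance of the theorem is the converse, which is what I address below. Suppose $\ent[\cM]=\Lambda<\infty$. The first variation formula quoted in the introduction shows the total curvature $TC(t):=\int_{M_t}|\kappa|\,ds$ is non-increasing in $t$, hence $\sup_t TC(t)=\lim_{t\to-\infty}TC(t)$, and it suffices to bound this backward limit.

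My approach is a parabolic blow-down at $t=-\infty$ combined with Sturm's theorem on zeros of $\kappa$. Choose $\lambda_j\to\infty$ and set $\cM^j=\{(\lambda_j^{-1}M_{\lambda_j^2\tau},\tau):\tau<0\}$. Since both the entropy and the total curvature are scale-invariant, $\ent[\cM^j]\leq\Lambda$ and $TC(\cM^j_{-1})=TC(M_{-\lambda_j^2})$, so bounding $\limsup_{j\to\infty}TC(\cM^j_{-1})$ is equivalent to bounding the backward limit above. By Huisken's monotonicity together with the entropy bound and standard compactness for Brakke flows, a subsequence of $\cM^j$ converges to a self-similarly shrinking ancient Brakke flow $\cM^\infty$ of entropy at most $\Lambda$. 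Embeddedness of the $\cM^j$ combined with Brakke--White regularity and the classification of planar self-shrinkers (going back to Abresch--Langer) forces each time slice of $\cM^\infty$ to be a line or a round circle with finite multiplicity bounded in terms of $\Lambda$; in particular $\cM^\infty$ has finite total curvature per sheet.

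The main obstacle, and where the argument becomes delicate, is converting the asymptotic model into a uniform bound on $TC(M_t)$. Local smooth convergence of $\cM^j$ to $\cM^\infty$ only controls total curvature on compact pieces of the rescaled curve, and an embedded arc can a priori carry arbitrarily large turning, as illustrated by the logarithmic spiral noted in the introduction (finite entropy but infinite total curvature). To handle this I would invoke Sturm's theorem: the number of inflection points $Z(t):=\#\{\kappa(\cdot,t)=0\}$ is finite at every $t$ and non-increasing in $t$, and the problem reduces to bounding $Z(-\infty):=\lim_{t\to-\infty}Z(t)$ by a constant $C_1(\Lambda)$. The key step, which I expect to be the hardest, is to use the rescaled proximity of $\lambda_j^{-1}M_{\lambda_j^2\tau}$ to a line or circle, together with a propagation-of-smallness or frequency-type estimate for the parabolic equation satisfied by $\kappa$, to prevent zeros of $\kappa$ from accumulating on each sheet. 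Once $Z(t)$ is controlled, one splits $M_t$ into its monotone arcs between consecutive inflections: on each such arc the tangent angle is monotone, and the blow-down proximity to a line or circle bounds the tangential turning per arc by $2\pi+o(1)$ as $t\to-\infty$, giving $TC(M_t)\leq 2\pi\bigl(Z(t)+1\bigr)+o(1)\leq C(\Lambda)$, uniformly in $t$.
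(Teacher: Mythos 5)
Your forward direction is exactly the paper's (Theorem~\ref{thm: finite entropy} applied slice by slice), but your converse contains genuine gaps. First, your opening reduction already fails: the first variation formula for $\int|\kappa|\,ds$ is stated in the introduction only for \emph{closed} curves, and for a complete non-compact flow the monotonicity of total curvature requires controlling boundary terms at infinity, which you have not done; a priori $TC(t)$ could even be infinite for every $t$, so the reduction to bounding $\lim_{t\to-\infty}TC(t)$ is unjustified. The paper instead bounds $\int_{M_t}|\kappa|\,ds$ at every fixed $t$ directly and never uses monotonicity of total curvature. Second, and more seriously, the step you yourself flag as hardest --- preventing zeros of $\kappa$ from accumulating via an unspecified ``propagation-of-smallness or frequency-type estimate'' --- is not supplied, and even if it were, bounding the number of inflection points $Z(t)$ cannot close the argument: an arc with \emph{no} inflection points can have arbitrarily large total turning (the logarithmic spiral you cite is exactly such an example, with $\kappa$ never vanishing and infinite total curvature). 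Your claimed bound $TC(M_t)\leq 2\pi(Z(t)+1)+o(1)$ rests on ``$2\pi+o(1)$ turning per monotone arc from blow-down proximity,'' but smooth convergence of the rescaled flows only controls compact pieces; the turning accumulated in regions that escape to infinity or collapse under the rescaling is precisely what is left uncontrolled. In short, the quantity you need to bound is not the number of sign changes of $\kappa$ but the number of times the tangent direction repeats modulo $\pi$.

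That last quantity is what the paper actually controls. The chain is: the sheeting structure at very old times (Theorem~\ref{thm: rough convergence}) plus the monotone ancient paths of local extrema of the squared-distance functions $\varphi^{(x_0)}$ give a uniform bound ($m$ minima, $m-1$ maxima) on the number of critical points of every $\varphi^{(x_0)}(\cdot,t)$ (Proposition~\ref{prop: finite number of minimum points}); a point-selection argument (Lemma~\ref{lem: surjectivity}, together with Lemma~\ref{lem: unbdd dist}) converts ``too many points with parallel same-sign curvature vectors'' into a squared-distance function with too many local extrema, yielding that $\theta(\cdot,t)\equiv\xi \bmod \pi$ has at most $2m-1$ solutions for every $\xi$ (Proposition~\ref{prop: bounded parallel tangent/normal lines}); and the coarea formula then gives $\int_{M_t}|\kappa|\,ds\leq(2m-1)\pi$ (Theorem~\ref{thm: bounded total curvature}). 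This bound on parallel normal lines is exactly the anti-spiraling input that your inflection-point count misses. To repair your argument you would need to replace the bound on $Z(t)$ by a bound on $\sup_\xi\#\{\theta(\cdot,t)=\xi \bmod \pi\}$, at which point you would essentially be reproducing the paper's proof.
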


Angenent and You \cite{AY21} constructed examples of ancient solutions by gluing together a finite number of grim reapers, called the \emph{ancient trombones}. They showed that their ancient trombones have finite total curvature. In the $2$-dimensional case, Lambert--Lotay--Schulze \cite{LLS} showed that the special Lagrangian surfaces (considered as static ancient solutions) with finitely many planar ends have finite total curvature, by utilizing the correspondence between special Lagrangians and algebraic curves in $\mathbb{C}^{2}$ via hyperk\"ahler rotations. It would be interesting to investigate the relation between the entropy and the total curvature in higher dimensions.

Assuming well-definedness and boundedness of Lagrangian angle, Neves \cite{Neves07} used the Huisken's monotonicity formula to argue that the tangent flows of a Lagrangian mean curvature flow must be a union of Lagrangian cones $C_1, \cdots,  C_{N}$ with a \emph{unique} set of constant angles $\overline{\theta}_{1}, \cdots, \overline{\theta}_{N}$. 
Similar argument was used by Lambert--Lotay--Schulze \cite{LLS} to show the analogous result for tangent flow at infinity for ancient Lagrangian mean curvature flows. 
In curve shortening flow, namely the $1$-dimensional Lagrangian mean curvature flow, the uniqueness of the set of constant Lagrangian angles of the tangent flow directly leads to the uniqueness of tangent flow. Using the simple fact that finite total curvature implies the boundedness of the Lagrangian angle, we give a short proof of \cite[Theorem 1.1]{CSSZ24} in the complete non-compact case:
\begin{corollary}
    The tangent flow at infinity of a smooth, complete, non-compact, embedded ancient solution of the curve shortening flow with finite entropy is unique.
\end{corollary}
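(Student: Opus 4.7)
\emph{Proof proposal.} The plan is to use Theorem~\ref{thm: equivalence} to convert the finite-entropy hypothesis into a uniform bound on the Lagrangian angle, and then invoke the Lagrangian angle argument of Neves \cite{Neves07} and Lambert--Lotay--Schulze \cite{LLS} to conclude uniqueness of the tangent flow at infinity. The substantive new input is the reduction from finite entropy to bounded Lagrangian angle; the rest is a direct application of \cite{LLS}, simplified by the fact that we are in dimension one.

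First I would apply Theorem~\ref{thm: equivalence} to conclude that $\cM$ has finite total curvature. Parameterizing each $M_t$ by arc length and writing $\bt=(\cos\theta,\sin\theta)$, one has $\theta_s=\kappa$, so the total variation of $\theta$ along each $M_t$ equals $\int_{M_t}|\kappa|\,ds$ and is uniformly bounded in $t$. Developing $\theta$ continuously in both $s$ and $t$, consistent with the evolution $\theta_t=\theta_{ss}$ along curve shortening flow, yields a globally defined, uniformly bounded function $\theta:\cM\to\R$. This is exactly the boundedness of the Lagrangian angle required as an input by \cite{Neves07, LLS}.

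Next, I would consider an arbitrary tangent flow at infinity $\cM^\infty$, obtained as a Brakke limit of parabolic rescalings $\lambda_i^{-1}\cM$ with $\lambda_i\to\infty$. Since the entropy is invariant under parabolic rescaling and bounded by $\ent[\cM]$, Brakke compactness yields a subsequential limit. Huisken's monotonicity formula applied at the spacetime origin forces $\cM^\infty$ to be self-similarly shrinking; being smooth, embedded, non-compact and one-dimensional, each time slice of $\cM^\infty$ must be a finite union of lines through the origin, on each of which the Lagrangian angle is constant. Following \cite{Neves07, LLS}, testing Huisken's monotonicity against Gaussian-weighted integrals built from $\theta$ (legitimate precisely because $\theta$ is bounded on $\cM$) pins down the unordered multi-set of limit angles $\{\bar\theta_1,\ldots,\bar\theta_N\}$ with their multiplicities, independently of the sequence $\{\lambda_i\}$.

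Finally, in dimension one, a static configuration of lines through the origin in $\R^2$ is uniquely determined by its multi-set of tangent angles modulo $\pi$, and the associated shrinking Brakke flow is therefore determined by the same data. Hence $\cM^\infty$ is unique. The main obstacle is the previous step: one must check that the \cite{Neves07, LLS} monotonicity arguments, developed for Lagrangian mean curvature flow in higher dimensions, adapt cleanly to a one-dimensional non-compact ancient flow whose ends escape to spatial infinity. Concretely, one must verify that the relevant Gaussian-weighted integrals of $\cos(k\theta)$ and $\sin(k\theta)$ converge and satisfy the appropriate monotonicity identities in this non-compact setting, but this follows from the entropy bound on $\cM$ combined with the uniform bound on $\theta$ just established.
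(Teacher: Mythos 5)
Your route is essentially the one the paper itself takes for this corollary: use Theorem~\ref{thm: equivalence} (via Theorem~\ref{thm: bounded total curvature} and Corollary~\ref{cor: bound of angle}) to turn finite entropy into a uniform bound on the Lagrangian angle, then run the weighted-monotonicity argument of \cite{Neves07, LLS} on functions of $\theta$ to pin down the limit angles, which in dimension one determines the tangent flow. The paper's implementation (Theorem~\ref{thm:main.unique}) differs only in mechanics: it does not recover the full multiset of angles from many test functions, but instead invokes the rough convergence Theorem~\ref{thm: rough convergence} to know that the rescaled slices are $m$ nearly parallel graphs with angles $S(\tau)+a_i\pi$, after which the single monotone quantity $\int_{M_t}\theta^2\Phi_{(0,0)}\,ds$ forces $S(\tau)$ to converge.

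Three places in your sketch need shoring up. First, and most seriously, you justify the claim that the tangent flow at infinity is a finite union of lines through the origin by calling the limit ``smooth, embedded, non-compact and one-dimensional,'' but a Brakke limit of parabolic rescalings is not a priori smooth or embedded; ruling out Abresch--Langer components, junction-type stationary $1$-varifolds, and uncontrolled multiplicity is exactly the content of the structure theorem imported from \cite{CSSZ24} (Theorem~\ref{thm: rough convergence}), which you must cite rather than derive from Huisken monotonicity alone. Second, $\cos(k\theta)$ and $\sin(k\theta)$ are not convex, so their Gaussian-weighted integrals are not directly monotone; since $(\partial_t-\partial_s^2)f(\theta)=-f''(\theta)\kappa^2$, only convex $f$ gives monotone quantities, and you should either use $(\theta-c)^2$ or add a large multiple of $\theta^2$ (legitimate because $\theta$ is bounded) before passing to limits. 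Third, a per-slice bound on the total variation of $\theta$ does not by itself bound $\theta$ uniformly in spacetime; one needs the anchoring supplied by Proposition~\ref{prop: finite range of angle}, as in the proof of Corollary~\ref{cor: bound of angle}. With these repairs your argument goes through and coincides with the paper's.
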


This paper is organized as follows: In Section 2, we recall Sturm's theory and its applications to geometric functions: the squared-distance function and the angle function. In addition, we recall the rough convergence theorem~\ref{thm: rough convergence} from \cite{CSSZ24} that gives the sheeting structure for every very-old time-slice of CSF in our setting. 
The combination of Strum's theory and the rough convergence theorem gives a uniform bound of the number of local minimum/maximum points of the squared-distance functions (Proposition~\ref{prop: finite number of minimum points}).
This is the most crucial observation in this paper.
In Section 3.1, we prove that the finite total curvature condition implies the finite entropy condition for curves (Theorem~\ref{thm: finite entropy}).
In Section 3.2, we prove the bound of the number of preimages of (mod $\pi$)-angle function (Proposition~\ref{prop: bounded parallel tangent/normal lines}). 
The proof involves a contradiction argument together with the uniform bound of local maximum points of squared-distance functions (Proposition~\ref{prop: finite number of minimum points}) and a point selection trick (Lemma~\ref{lem: surjectivity}) to find a squared-distance function that violates the bound. 
The control of preimages of (mod $\pi$)-angle function allows us to find the bound of the total curvature for finite entropy CSF in our setting (Theorem~\ref{thm: bounded total curvature}) and a uniform bound of angle function for the non-compact case (Corollary~\ref{cor: bound of angle}). 
In Section 4, we give a short proof of the uniqueness of the tangent flow for the non-compact case (Theorem~\ref{thm:main.unique}) by employing the uniform bound of the angle function.

\section*{Acknowledgement}
The authors would like to thank the National Center for Theoretical Sciences (NCTS) for providing the excellent research environment and support. The authors thank Kyeongsu Choi and Donghwi Seo for inspiring discussions, which directly leads to the results in this paper.

\section{Preliminaries}

\subsection{Sturm's Theory}\label{sec: 2.1}
For a function $u: [A_1, A_2] \to \R$, we will call the set $u^{-1}(0)$ of zeros the \emph{nodal set} of $u$. Any connected component of $[A_1, A_2] \setminus u^{-1}(0)$ is called a \emph{nodal domain}. A point $x_0\in (A_0, A_1)$ is called a \emph{multiple zero} of $u$ if $u(x_0) = u_x (x_0) = 0$.
\begin{proposition}[Sturm's theory \cite{Ang:1988:ZSS}]\label{prop: zeroset}
Let $u: [A_1, A_2]\times [0, T] \to \mathbb{R} $ be a smooth nontrivial solution to the parabolic equation $u_t=au_{xx}+bu_x + cu$, where $a,b,c\in C^{\infty}([A_1, A_2]\times [0,T])$ and $a>0$. Suppose $u$ satisfies Dirichlet, Neumann, the periodic boundary condition, or the non-vanishing boundary condition: $u(A_i,t) \neq 0$ holds for all $t\in [0,T]$ and $i = 1,2$. Let $z(t)$ denote the number of zeros of $u(\cdot, t)$ in $[A_0, A_1]$ with $t\in [0, T]$. Then 
\begin{enumerate}
    \item[$(a)$] for all $t\in [0, T]$, $z(t)$ is finite,
    \item[$(b)$] if $(x_0, t_0)$ is a multiple zero of $u$, i.e., $u$ and $u_x$ both vanish at $(x_0, t_0)$, then for all $t_1< t_0 < t_2$ we have $z(t_1) > z(t_2)$. More precisely, there exists a neighborhood $N = [x_0 - \vare , x_0 + \vare] \times [t_0 - \delta, t_0 + \delta]$ such that 
    \begin{enumerate}
        \item[$(i)$] $u(x_0 \pm \vare, t) \neq 0$ for $\lvert t - t_0 \rvert \leq \delta$,
        \item[$(ii)$] $u(\cdot, t + \delta)$ has at most one zero in the interval $[x_0 - \vare , x_0 + \vare]$,
        \item[$(iii)$] $u(\cdot, t - \delta)$ has at least two zeros in the interval $[x_0 - \vare , x_0 + \vare]$.
    \end{enumerate}   
\end{enumerate} 
\end{proposition}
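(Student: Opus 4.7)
The plan is to combine parabolic unique continuation with an explicit heat-polynomial model near a multiple zero. For part~(a), I would argue by contradiction: if $u(\cdot, t_0)$ had infinitely many zeros in $[A_1, A_2]$, compactness would produce an accumulation point $x_\ast$, and letting successive zeros tend to $x_\ast$ would force $\partial_x^k u(x_\ast, t_0) = 0$ for every $k \geq 0$. Spatial unique continuation for parabolic equations (via Aronszajn's Carleman estimate or Poon's frequency function) then yields $u \equiv 0$, contradicting non-triviality. The four permitted boundary conditions each independently rule out accumulation at the endpoints: for Dirichlet/Neumann one extends $u$ by odd/even reflection and applies interior unique continuation, for the periodic condition $u$ lives on a circle, and the non-vanishing condition excludes boundary zeros by continuity in a small neighbourhood of $A_1, A_2$.

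For part~(b), I translate so that $(x_0, t_0) = (0, 0)$ and reparameterise time to arrange $a(0, 0) = 1$. Let $n \geq 2$ be the spatial order of vanishing of $u$ at $(0, 0)$, which is finite by the same unique-continuation argument. Parabolic rescaling $u_\la(x, t) := \la^{-n} u(\la x, \la^2 t)$ in the limit $\la \downarrow 0$ should produce a nontrivial solution of $v_t = v_{xx}$ whose initial datum is a nonzero multiple of $x^n$, hence coincides with a nonzero scalar multiple of the heat polynomial
\[
H_n(x, t) = \sum_{k=0}^{\lfloor n/2 \rfloor} \frac{n!}{k!\,(n-2k)!}\, x^{n-2k}\, t^k.
\]
A direct computation gives that $H_n(\cdot, t)$ has exactly $n$ simple real zeros for $t < 0$ and at most one for $t > 0$ (none if $n$ is even, only the origin if $n$ is odd), which furnishes the numerology of (ii)--(iii).

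The local conclusions (i)--(iii) on a neighbourhood $[-\vare, \vare] \times [-\delta, \delta]$ then follow by transferring the simple-zero set of $H_n$ back to $u_\la$ via the implicit function theorem, once $\la$ is small enough; pulling back undoes the rescaling. Summing the resulting local drops across the multiple zeros in $[t_1, t_2]$---finite in number by part~(a) applied jointly to $u$ and to $u_x$, since a multiple zero is a common zero of the pair---produces the global inequality $z(t_1) > z(t_2)$. The main obstacle will be making the rescaled convergence $u_\la \to H_n$ quantitative in a parabolic $C^1$ norm on a fixed neighbourhood: the coefficients $b, c$ contribute terms of lower parabolic order, which are dominated because $n \geq 2$, but the Taylor remainders of $a$ and of the initial trace $u(\cdot, 0)$ must be controlled carefully through matched expansions in the parabolic variables $(x, \sqrt{|t|})$, which I would handle via a Schauder-type estimate applied to the rescaled equation.
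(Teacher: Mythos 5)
First, a point of comparison: the paper does not prove this proposition at all --- it is quoted from Angenent \cite{Ang:1988:ZSS} and used as a black box, so there is no in-paper argument to measure your sketch against. Your strategy is in fact close in spirit to Angenent's original one (reduce to the heat equation, show that near a multiple zero the solution is asymptotic to a heat polynomial $H_n$, and read the zero count off $H_n$), but the steps you defer are precisely the entire mathematical content of the theorem, and as written there are genuine gaps in each of them.

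Concretely: (1) both for part (a) and for the blow-up in part (b) you need the \emph{parabolic} order of vanishing (vanishing of $\partial_x^j\partial_t^k u$ for $j+2k<n$) to be finite, not just the spatial order at $t=t_0$; finiteness of this order for smooth, non-analytic coefficients is a spacelike strong unique continuation statement that you invoke by name but do not supply, and it is the hard step --- Angenent avoids it by first transforming the equation to $u_t=u_{xx}+\tilde c u$ and running an explicit iteration. (2) The assertion that $u_\la$ converges to a \emph{nonzero} multiple of $H_n$ requires compactness of the rescaled family together with nondegeneracy of the limit (a frequency-function monotonicity or Angenent's iteration); ``should produce a nontrivial solution'' is exactly where such an argument can fail. (3) Even granting the convergence, the implicit function theorem only transfers \emph{simple} zeros of $H_n$ and hence yields lower bounds on the number of zeros of $u$; the upper bound ``at most one zero at $t=t_0+\delta$'' in (ii) needs a separate exclusion of extra zeros, and the self-similar rescaling only controls the parabolic region $|x-x_0|\lesssim\sqrt{|t-t_0|}$, not the fixed rectangle $[x_0-\vare,x_0+\vare]\times[t_0-\delta,t_0+\delta]$ on which (i)--(iii) are stated; the correct order of quantifiers is to choose $\vare$ first so that $x_0$ is the only zero of $u(\cdot,t_0)$ in $[x_0-\vare,x_0+\vare]$ (using part (a)) and only then choose $\delta$. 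Finally, the global inequality $z(t_1)>z(t_2)$ for all $t_1<t_0<t_2$ also requires that $z$ be non-increasing away from multiple-zero times --- no zeros are created at simple zeros and none enter through the boundary, which is where the four boundary conditions are actually used --- and this is gestured at but not argued.
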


\begin{corollary}\label{cor: almost no multiple zero}
Let $u$ be assumed as in the above proposition. Then the set 
\begin{equation*}
    \{t\in [0, T]: u(\cdot, t) \mbox{ has a multiple zero}\}
\end{equation*} 
is finite.
\end{corollary}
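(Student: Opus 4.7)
The plan is a simple counting argument exploiting the fact that $z(t)$ is non-negative integer-valued and, by Proposition~\ref{prop: zeroset}(b), strictly decreases across every multiple-zero time. Since the starting value $N_{0} := z(0)$ is finite by Proposition~\ref{prop: zeroset}(a), only finitely many such drops can occur.

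To implement this, I would take an arbitrary finite subfamily $t_{1} < t_{2} < \cdots < t_{k}$ of multiple-zero times lying in the open interval $(0, T)$, interlace them with regular times $0 = s_{0} < t_{1} < s_{1} < t_{2} < \cdots < t_{k} < s_{k} \leq T$, and apply Proposition~\ref{prop: zeroset}(b) at each $t_{i}$ with the roles of its $t_{1}, t_{2}$ played by $s_{i-1}, s_{i}$. This yields the descending chain
\begin{equation*}
    N_{0} = z(s_{0}) > z(s_{1}) > \cdots > z(s_{k}) \geq 0,
\end{equation*}
forcing $k \leq N_{0}$. Hence there are at most $N_{0}$ multiple-zero times in $(0,T)$, and allowing for possible multiple zeros at the two endpoints $t = 0$ and $t = T$, the full multiple-zero set has cardinality at most $N_{0} + 2$ and in particular is finite.

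No serious obstacle is expected here: the only thing to verify is that Proposition~\ref{prop: zeroset}(b) is applicable at each $t_{i}$ for the chosen $s_{i-1}, s_{i}$, but the form stated above holds for \emph{any} $t_{1} < t_{0} < t_{2}$ (no smallness of $t_{2}-t_{1}$ is required). One could alternatively phrase the proof by first establishing that $z$ is globally non-increasing on $[0,T]$ via the implicit function theorem at simple zeros, combined with the strict drop at multiple zeros, but the interlacing argument above bypasses that step and closes the counting immediately.
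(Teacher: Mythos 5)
Your counting argument is correct and is precisely the intended proof: the paper states this corollary without proof because it follows immediately from Proposition~\ref{prop: zeroset} in exactly the way you describe, namely that $z$ is a finite nonnegative integer at $t=0$ and strictly drops across every multiple-zero time, so at most $z(0)$ such times (plus possibly the two endpoints) can occur. Your interlacing implementation is a clean way to make the strict-descent chain precise, and no gap remains.
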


\subsection{Ancient paths of knuckles and tips}\label{subsec: knuckle/tip}
In this subsection, we assume that $\cM = \{M_t\}_{t\in I}$ is an ancient complete embedded smooth solution to the CSF with finite entropy parametrized by $\ga$. 
Given $x_0\in \R^2$, define the \emph{squared-distance function} with respect to $x_0$ by $\varphi^{(x_0)}(s,t) = |\ga(s,t) - x_0|^2 + 2t$. 
A direct computation gives
\begin{align}\label{eq: derivatives of distance function}
    \tfrac{\partial}{\partial s}\varphi^{(x_0)} = 2 \langle \ga(s, t) - x_0, \bt(s, t) \rangle, \quad \tfrac{\partial^2}{\partial s^2} \varphi^{(x_0)} = 2 + 2\langle \ga(s,t) - x_0, \boldsymbol{\kappa}(s,t)\rangle.
\end{align}
Thus, $\varphi^{(x_0)}$ satisfies the heat equation:
\begin{align}\label{eq: heat.dist}
    \tfrac{\partial}{\partial t}\varphi^{(x_0)}=\tfrac{\partial^2}{\partial s^2} \varphi^{(x_0)}.
\end{align}

\begin{lemma}[\cite{CSSZ24} Lemma~2.6]\label{lem: center of distance}
   $\varphi^{(x_0)}(\cdot, t_0)$ attains a critical point at $s_0$ if and only if $x_0$ is on the normal line of $\ga$ at $\ga(s_0,t_0)$, that is, for some $\al \in \R$
\begin{equation}\label{eq: center}
    x_0 = \ga(s_0,t_0) - \al \bn(s_0,t_0).
\end{equation}
Furthermore, if $\ka(s_0,t_0) = 0$, then $\varphi^{(x_0)}(\cdot, t_0)$ always attains a local minimum at $s_0$; if $\ka(s_0,t_0) \neq 0$ and we substitute $\alpha = \beta / \ka(s_0, t_0)$ in $(\ref{eq: center})$, then
\begin{enumerate}
    \item[$(a)$] when $\beta > 1$, $\varphi^{(x_0)}(\cdot, t_0)$ attains a local maximum at $s_0$, 
    \item[$(b)$] when $\beta < 1$, $\varphi^{(x_0)}(\cdot, t_0)$ attains a local minimum at $s_0$,
    \item[$(c)$] when $\beta = 1$, $x_0$ is the center of the osculating circle tangent to $\ga$ at $\ga(s_0,t_0)$, that is,  $\varphi^{(x_0)}_s(s_0,t_0) = \varphi^{(x_0)}_{ss}(s_0,t_0) = 0$.
\end{enumerate}
\end{lemma}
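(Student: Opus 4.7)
The plan is to perform a direct critical-point analysis of $\varphi^{(x_0)}(\cdot, t_0)$ using only the two derivative formulas already displayed in~\eqref{eq: derivatives of distance function}, followed by the second-derivative test.

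For the first assertion, I would note that $\tfrac{\partial}{\partial s}\varphi^{(x_0)}(s_0,t_0) = 2\langle \ga(s_0,t_0) - x_0, \bt(s_0,t_0)\rangle$ vanishes if and only if $\ga(s_0,t_0) - x_0$ is orthogonal to $\bt(s_0,t_0)$. Since $\{\bt(s_0,t_0), \bn(s_0,t_0)\}$ is an orthonormal basis of $\R^2$, the orthogonal complement of $\bt(s_0,t_0)$ is spanned by $\bn(s_0,t_0)$, so this is equivalent to $\ga(s_0,t_0) - x_0$ being a (possibly zero) scalar multiple of $\bn(s_0,t_0)$. Writing this multiple as $\al$ yields exactly the relation $x_0 = \ga(s_0,t_0) - \al\,\bn(s_0,t_0)$ for a unique $\al \in \R$.

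For the remaining assertions, I would substitute the critical-point relation $\ga(s_0,t_0) - x_0 = \al\,\bn(s_0,t_0)$ into the second-derivative formula $\tfrac{\partial^2}{\partial s^2}\varphi^{(x_0)} = 2 + 2\ka\,\langle \ga - x_0, \bn\rangle$ to obtain $\tfrac{\partial^2}{\partial s^2}\varphi^{(x_0)}(s_0,t_0) = 2 + 2\al\,\ka(s_0,t_0)$ at the critical point. When $\ka(s_0,t_0) = 0$, this equals $2 > 0$ regardless of $\al$, so the second-derivative test immediately gives a strict local minimum. When $\ka(s_0,t_0) \neq 0$, substituting $\al = \beta/\ka(s_0,t_0)$ reduces $\tfrac{\partial^2}{\partial s^2}\varphi^{(x_0)}(s_0,t_0)$ to an affine function of $\beta$ alone, and the three subcases (a), (b), (c) correspond exactly to this quantity being negative, positive, or zero. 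For case (c), I would additionally verify that the corresponding point $x_0 = \ga(s_0,t_0) - \tfrac{1}{\ka(s_0,t_0)}\,\bn(s_0,t_0)$ is the center of the osculating circle of radius $1/|\ka(s_0,t_0)|$ at $\ga(s_0,t_0)$, and that both $\varphi^{(x_0)}_s(s_0,t_0)$ and $\varphi^{(x_0)}_{ss}(s_0,t_0)$ vanish there, as claimed.

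I do not anticipate substantive obstacles: the whole argument is a single computation followed by the standard second-derivative test, with no global or analytic input beyond the two derivative identities. The only point requiring care is tracking the sign convention linking $\al$, $\beta$, the signed curvature $\ka$, and the orientation $\bn = J\bt$ of the normal vector, so that the substitution $\al = \beta/\ka(s_0,t_0)$ produces precisely the cutoffs $\beta > 1$ for the local-maximum case, $\beta < 1$ for the local-minimum case, and $\beta = 1$ for the osculating-circle case.
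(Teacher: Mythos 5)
The paper itself gives no proof of this lemma (it is imported from [CSSZ24] with only the displayed derivative formulas as supporting computation), so the benchmark is the statement itself; your direct critical-point analysis is certainly the intended argument, and the first assertion, the identity $\varphi^{(x_0)}_{ss}(s_0,t_0) = 2 + 2\al\,\ka(s_0,t_0)$ at a critical point, and the $\ka(s_0,t_0)=0$ case are all handled correctly.

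The difficulty is the one step you explicitly postpone, the sign bookkeeping, and it is exactly where the proposal fails to close. With \eqref{eq: center} as printed you have $\ga(s_0,t_0)-x_0 = \al\,\bn(s_0,t_0)$, so the literal substitution $\al = \beta/\ka(s_0,t_0)$ gives $\varphi^{(x_0)}_{ss}(s_0,t_0) = 2+2\beta$. Then $\beta>1$ forces $\varphi^{(x_0)}_{ss}>4>0$, a strict local \emph{minimum}, contradicting $(a)$; likewise $\beta=1$ gives $\varphi^{(x_0)}_{ss}=4\neq 0$, and the corresponding point $x_0=\ga-\ka^{-1}\bn$ is the reflection of the osculating center across the curve, not the center itself, which is $\ga+\ka^{-1}\bn$ (check on a counterclockwise circle with $\bn=J\bt$ and $\ka=\langle\ga_{ss},\bn\rangle>0$). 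So your claim that the three subcases ``correspond exactly'' to the second derivative being negative, positive, or zero is false for the expression you computed: the stated cutoffs are recovered only if the substitution is $\al=-\beta/\ka(s_0,t_0)$, equivalently if \eqref{eq: center} is read as $x_0=\ga+\al\bn$, in which case $\varphi^{(x_0)}_{ss}(s_0,t_0)=2-2\beta$ and $(a)$--$(c)$ follow, with $\beta=1$ indeed giving the osculating center. A complete write-up must carry out this check and either flag the sign discrepancy in the printed substitution or prove the statement with the inequalities reversed; executed literally, your computation derives the opposite of $(a)$ and $(b)$.
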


\begin{proposition}[ancient paths of knuckles/tips, \cite{CSSZ24} Proposition~2.7]\label{prop: ancient local-min-path/local-max-path}
Suppose that for some $t_0\in I$ such that $p_0$ and $q_0$ are, respectively, a local minimum point, a local maximum point of $\varphi^{(x_0)}(\cdot, t_0)$. Then the following statements hold.
\begin{enumerate}
    \item[$(a)$] There exist continuous ancient paths $\bp = \bigcup_{t\leq t_0} (p(t), t), \bq = \bigcup_{t\leq t_0} (q(t), t)\subset \mathbb{L}\times (-\infty, t_0]$ such that $p(t_0) = p_0$, $q(t_0) = q_0$ and $p(t), q(t)$ are a local minimum point and a local maximum point of $\varphi^{(x_0)}(\cdot, t)$, respectively, for all $t\leq t_0$. Furthermore, for all $t_1 \leq t_2 \leq t_0$,
    \begin{equation}\label{eq: monotone of local extremum}
        \varphi^{(x_0)}(\bp(t_1)) \leq \varphi^{(x_0)}(\bp(t_2)) \quad \text{and} \quad  \varphi^{(x_0)}(\bq(t_1)) \geq  \varphi^{(x_0)}(\bq(t_2)).
    \end{equation}

    \item[$(b)$] The sign of curvature $\ka(\bq(t))$ remains the same for all $t\leq t_0$.

    \item[$(c)$] $(-t)^{-1}|\ga(\bp(t))|^2$ is increasing in $t$.
    
    \item[$(d)$] 
        \begin{equation}\label{eq: limsup of minimum}
            \limsup_{t\to -\infty} \,(-t)^{-1}|\ga(\bp(t))|^2 \leq  2
        \end{equation}
        and
        \begin{equation}\label{eq: liminf of maximum}
            \liminf_{t\to -\infty} \,(-t)^{-1}|\ga(\bq(t))|^2 \geq  2.
        \end{equation}
\end{enumerate} 
\end{proposition}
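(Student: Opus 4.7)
The plan is to apply Sturm's theorem (Proposition~\ref{prop: zeroset}) to $w := \pa_s\varphi^{(x_0)} = 2\langle\ga - x_0, \bt\rangle$, whose zeros are precisely the critical points of $\varphi^{(x_0)}(\cdot, t)$. Since $\varphi^{(x_0)}$ satisfies the heat equation (\ref{eq: heat.dist}), differentiating in $s$ and using the commutator $[\pa_t, \pa_s] = \ka^2 \pa_s$ along the flow yields a linear parabolic equation of the form $\pa_t w = \pa_s^2 w + \ka^2 w$, so Sturm applies after passing to a fixed material parameter.

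For \textbf{(a)}, I would choose small intervals $I_{p_0} := [p_0 - \vare, p_0 + \vare]$ and $I_{q_0} := [q_0 - \vare, q_0 + \vare]$ on which $w(\cdot, t_0)$ does not vanish at the endpoints and exhibits the sign change $-/+$ at $p_0$ and $+/-$ at $q_0$. By continuity the endpoint sign configuration persists for $t$ in a neighborhood of $t_0$, so Sturm applies and $w(\cdot, t)$ retains at least one sign-changing zero in each interval; designate $p(t), q(t)$ as such zeros. Iterating backward in time, and using that the zero count of $w(\cdot, t)$ is non-decreasing as $t$ decreases, produces the paths for all $t \leq t_0$. The inequality (\ref{eq: monotone of local extremum}) then follows from the chain rule:
\[
\tfrac{d}{dt}\varphi^{(x_0)}(\bp(t), t) = \pa_s\varphi^{(x_0)} \cdot \dot p + \pa_t\varphi^{(x_0)} = 0 + \pa_s^2\varphi^{(x_0)} \geq 0,
\]
since $\bp(t)$ is a local minimum; the reverse sign holds at the local maximum $\bq(t)$. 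For \textbf{(b)}, at $\bq(t)$ the condition $\pa_s^2\varphi^{(x_0)}(\bq(t), t) \leq 0$ combined with (\ref{eq: derivatives of distance function}) forces $\ka\langle\ga - x_0, \bn\rangle \leq -1$, so $\ka(\bq(t), t) \neq 0$; continuity of $\ka$ along the continuous path $\bq$ then fixes its sign.

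For \textbf{(c)} and \textbf{(d)}, I would exploit the monotonicity from (a): the identity $|\ga(\bp(t)) - x_0|^2 = \varphi^{(x_0)}(\bp(t)) - 2t$ together with $\varphi^{(x_0)}(\bp(t)) \leq \varphi^{(x_0)}(\bp(t_0))$ for $t \leq t_0$ yields $|\ga(\bp(t)) - x_0|^2 \leq C_p + 2(-t)$ for some constant $C_p$. Dividing by $-t$ and sending $t \to -\infty$ gives $\limsup_{t \to -\infty} |\ga(\bp(t)) - x_0|^2/(-t) \leq 2$; the triangle inequality together with the square-root growth of $|\ga(\bp(t))|$ just derived lets us replace $|\ga - x_0|^2$ by $|\ga|^2$ up to an $O(1/\sqrt{-t})$ error, giving (\ref{eq: limsup of minimum}). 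The symmetric argument on $\bq$, where $\varphi^{(x_0)}(\bq(t))$ is non-increasing in $t$, gives (\ref{eq: liminf of maximum}); the monotonicity in (c) follows by the same comparison applied to the monotone behavior of $\varphi^{(x_0)}(\bp(t))$.

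The main obstacle lies in treating the degenerate times in part (a), at which $w(\cdot, t)$ has a multiple zero and $\bp$ or $\bq$ may bifurcate into several local extrema. Proposition~\ref{prop: zeroset}(b) together with Corollary~\ref{cor: almost no multiple zero} guarantees that such times are isolated (finitely many in any compact time interval), so one can consistently select a branch at each bifurcation to produce the continuous paths all the way back to $-\infty$.
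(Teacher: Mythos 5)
The paper does not actually prove this proposition: it is imported verbatim from \cite{CSSZ24} (Proposition~2.7) and used as a black box, so there is no in-paper argument to compare yours against. On its own merits, your strategy --- Sturm's theorem applied to $w=\pa_s\varphi^{(x_0)}$, the chain rule at critical points for \eqref{eq: monotone of local extremum}, and the second-derivative test for (b) --- is the natural one, and parts (a), (b), (d) are essentially sound modulo two points in (a) that need tightening: the fixed window $[p_0-\vare,p_0+\vare]$ only controls the sign configuration for $t$ in a neighborhood of $t_0$, so ``iterating backward'' requires an argument that the continuation does not terminate at a finite time (e.g.\ use $\varphi^{(x_0)}(\bp(t))\le\varphi^{(x_0)}(\bp(t_0))$ to confine the path to a compact region and take a limit at the infimum of continuation times, checking the limit is still a local minimum); and the identity $\tfrac{d}{dt}\varphi^{(x_0)}(\bp(t),t)=\pa_s\varphi^{(x_0)}\cdot\dot p+\pa_t\varphi^{(x_0)}$ presumes $\dot p$ exists, which fails at the bifurcation times, so monotonicity should be deduced from continuity plus a.e.\ differentiability.

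The genuine gap is part (c). You assert that the monotonicity of $(-t)^{-1}|\ga(\bp(t))|^2$ ``follows by the same comparison,'' but it does not. Writing $g(t)=(-t)^{-1}|\ga(\bp(t))-x_0|^2=2-\varphi^{(x_0)}(\bp(t))/t$, one has $g'(t)\ge0$ if and only if $\varphi^{(x_0)}(\bp(t))\ge t\,\tfrac{d}{dt}\varphi^{(x_0)}(\bp(t))$. Since $\tfrac{d}{dt}\varphi^{(x_0)}(\bp(t))\ge0$ and $t<0$, this is automatic only when $\varphi^{(x_0)}(\bp(t))\ge0$, i.e.\ when the local minimum point lies outside the circle $\pa B_{\sqrt{-2t}}(x_0)$. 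When $\varphi^{(x_0)}(\bp(t))<0$, using \eqref{eq: derivatives of distance function} and $\langle\ga-x_0,\bt\rangle=0$ at the critical point, the required inequality reduces to $a\bigl(\ka+\tfrac{a}{-2t}\bigr)\ge0$ where $a=\langle\ga-x_0,\bn\rangle$; this is a sign condition on the shrinker quantity at $\bp(t)$ that is not supplied by anything in your argument (the local-minimum condition only gives $1+\ka a\ge0$, which is strictly weaker in this regime). Some additional input is needed --- for instance a Sturm-type comparison applied to $\varphi^{(x_0)}$ itself, whose zeros are exactly the intersections of $M_t$ with $\pa B_{\sqrt{-2t}}(x_0)$. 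Note also that (c) and (d) are stated for the distance to the origin rather than to $x_0$: the discrepancy is harmless for the asymptotic bounds in (d), as you correctly observe, but exact monotonicity in (c) is not invariant under changing the center, so that point must be addressed too.
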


\subsection{The angle function}\label{sec: 2.3}
Let $\cM = \{M _t\}_{t\in I}$ be an ancient complete \emph{non-compact} embedded smooth solution to  CSF parametrized by $\ga: \R \times I \to \R^2$.  
We can choose a smooth global function $\th: \R\times I\to \R$ which is an extension of the angle measured from $e_1 = (1, 0)$ to $\bt(p,t)$ counterclockwisely mod $2\pi$. 
It is well-known that $\th_s = \kappa$, where the sign is positive in our convention of curvature.
Equivalently, this is the well-known identity for the Lagrangian angle
\begin{equation*}
    \boldsymbol{\kappa} = J \D \th.
\end{equation*}

\begin{lemma}\label{lem: angle equation}
Suppose that $\cM$ is a complete non-compact embedded smooth solution to CSF. The angle function satisfies the heat equation
\begin{equation}\label{eq: heat.angle}
    \tfrac{\pa}{\pa t}\th = \tfrac{d^2}{d s^2} \th.
\end{equation}
Moreover, 
\begin{equation}\label{eq: heat.angle2}
    \tfrac{\pa}{\pa t} \th^2 = \tfrac{d^2}{d s^2}\theta^2  - 2 \th_s^2.
\end{equation}
\end{lemma}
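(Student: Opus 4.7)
The plan is to derive the heat equation for $\theta$ by combining its defining relation with the standard commutator computation for the arc-length derivative on an evolving curve, then obtain the second identity by a direct chain-rule manipulation.

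First I would record the identity $\theta_s = \kappa$. Since $\cM$ is non-compact, $\mathbb{L} = \R$ is simply connected, so $\theta$ lifts to a smooth $\R$-valued function with $\bt = (\cos\theta, \sin\theta)$. Differentiating in arc length gives $\bt_s = \theta_s(-\sin\theta, \cos\theta) = \theta_s J\bt = \theta_s \bn$, and comparing with $\bt_s = \ka\bn$ yields $\theta_s = \ka$.

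Next, working with a fixed (non-arc-length) parameter $p$ and setting $v = |\ga_p|$, I would derive the commutator relation $[\pa_t, \pa_s] = \ka^2 \pa_s$. This follows because $\pa_t v = -\ka^2 v$ (a short calculation using $\ga_t = \ka\bn$ and $\bn_p = -\ka v \bt$), and then for any smooth $f(p,t)$,
\[
(\pa_t\pa_s - \pa_s\pa_t)f \;=\; \pa_t(v^{-1}f_p) - v^{-1}(\pa_t f)_p \;=\; -v^{-2}(\pa_t v)\,f_p \;=\; \ka^2 \pa_s f.
\]
With this in hand,
\[
\bt_t \;=\; \pa_t \ga_s \;=\; \pa_s \ga_t + \ka^2 \ga_s \;=\; \pa_s(\ka\bn) + \ka^2 \bt \;=\; \ka_s \bn - \ka^2 \bt + \ka^2 \bt \;=\; \ka_s \bn.
\]
On the other hand, $\bt_t = \theta_t(-\sin\theta,\cos\theta) = \theta_t \bn$, so matching the normal components gives $\theta_t = \ka_s = (\theta_s)_s = \theta_{ss}$, which is \eqref{eq: heat.angle}.

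The second identity follows by product rule: $\pa_t \theta^2 = 2\theta\,\theta_t$ and $\pa_{ss}\theta^2 = 2\theta\,\theta_{ss} + 2\theta_s^2$, and substituting $\theta_t = \theta_{ss}$ gives \eqref{eq: heat.angle2}. There is no serious obstacle; the only point requiring care is that all time derivatives must be taken at fixed $p$, and the commutator $[\pa_t,\pa_s]$ must be invoked to handle the mismatch between the moving arc-length frame and the fixed parameter. The non-compactness assumption is used only to guarantee that $\theta$ has a global smooth real-valued lift (so that $\theta^2$ is well defined).
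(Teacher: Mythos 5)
Your proof is correct and follows essentially the same route as the paper: use the commutator $[\pa_t,\pa_s]=\ka^2\pa_s$ to get $\pa_t\bt=\ka_s\bn$, write $\bt=(\cos\th,\sin\th)$, and compare coefficients to obtain $\th_t=\ka_s=\th_{ss}$, with the second identity following from the product rule. The only difference is that you derive the commutation rule and the chain-rule computation from scratch, whereas the paper cites them from Mantegazza and Ecker.
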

\begin{proof}
Note that \cite[Remark 2.3.2 page 34]{Man11} the commutation rule $\pa_t\pa_s = \pa_s \pa_t + \kappa^2 \pa_s$ on CSF implies that
\begin{align}\label{eq: evol.bt}
    \pa_t \bt = \kappa_s \bn.
\end{align}
By definition of $\theta$, we can write $\bt = (\cos \theta, \sin \theta)$. Plugging this into \eqref{eq: evol.bt} and comparing the coefficients gives $\theta_t = \kappa_s = \theta_{ss}$. Then \eqref{eq: heat.angle2} follows from chain rule for heat operator \cite[Lemma 3.14]{Eck2004RTM}.
\end{proof}

\begin{proposition}\label{prop: finite range of angle}
Suppose that $\cM$ is an ancient complete non-compact embedded smooth solution to CSF. Then the range of the angle of $M_t$
\begin{equation}
    \mathcal{A}(t):= \{\th(s, t): s\in \R\} \subseteq \R
\end{equation}
is non-increasing in the sense that $\mathcal{A}(t_1) \supseteq \mathcal{A}(t_2)$ for $t_1 < t_2$.
\end{proposition}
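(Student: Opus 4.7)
The plan is to argue by contradiction and apply Sturm's theorem (Proposition~\ref{prop: zeroset}) to $u(s,t) := \th(s,t) - \th_0$, which solves the heat equation by Lemma~\ref{lem: angle equation}. First, I observe that $\mathcal{A}(t) = \th(\cdot, t)(\R)$ is always an interval, as the image of a connected set under a continuous map. Suppose for contradiction that $\th_0 \in \mathcal{A}(t_2) \setminus \mathcal{A}(t_1)$ for some $t_1 < t_2$; since $\mathcal{A}(t_1)$ is an interval, after replacing $\th$ with $-\th$ (which also solves the heat equation) if necessary, I may assume $\th(s, t_1) < \th_0$ for all $s \in \R$, and pick $s_2 \in \R$ with $\th(s_2, t_2) = \th_0$.

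The next step is to identify a first-touching spacetime point. Let $t^* := \inf\{t \in [t_1, t_2] : \th_0 \in \mathcal{A}(t)\}$ and pick $s^*$ with $\th(s^*, t^*) = \th_0$ (after establishing that $t^* > t_1$ and that $\th_0$ is attained at $t^*$---see the obstacle below). By the minimality of $t^*$ and continuity of $\th$ in $t$, $\th(\cdot, t^*) \leq \th_0$ everywhere on $\R$, so $s^*$ is a global maximum of $\th(\cdot, t^*)$, which forces $\th_s(s^*, t^*) = 0$. Together with $u(s^*, t^*) = 0$, this makes $(s^*, t^*)$ a multiple zero of $u$ in the sense of Proposition~\ref{prop: zeroset}(b).

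I then finish by applying Sturm's theorem on a small spacetime box around $(s^*, t^*)$. Solutions of the heat equation are real-analytic in the spatial variable, and $u(\cdot, t^*) \not\equiv 0$ (otherwise the curve at time $t^*$ would be a straight line with constant tangent angle $\th_0$, hence $\th \equiv \th_0$ at all times by the stationarity of lines under CSF, contradicting $\th(\cdot, t_1) < \th_0$), so $s^*$ is an isolated zero of $u(\cdot, t^*)$. Choosing $\vare > 0$ small with $u(s^* \pm \vare, t^*) < 0$, continuity of $u$ combined with $u(\cdot, t) < 0$ for $t < t^*$ yields some $\delta \in (0, t^* - t_1)$ with $u(s^* \pm \vare, t) < 0$ on $[t^* - \delta, t^* + \delta]$, which verifies the non-vanishing boundary condition required. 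Proposition~\ref{prop: zeroset}(b) then asserts that $u(\cdot, t^* - \delta)$ has at least two zeros in $[s^* - \vare, s^* + \vare]$, contradicting $u(\cdot, t) < 0$ on $\R$ for all $t < t^*$.

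The main obstacle I expect is the existence step for $(s^*, t^*)$: showing that $t^* > t_1$ and that $\th_0$ is actually attained at $t^*$, since $\R$ is non-compact. In particular, if $\sup_s \th(s, t_1) = \th_0$ without being attained, the first touching of $\th_0$ could in principle occur along a sequence $(s_n, t_n) \to (\infty, t_1^+)$. I would address this by running the Sturm argument directly on each bounded interval $[-L, L]$ containing $s_2$: since $\th$ is real-analytic in $(s,t)$ in the interior of the spacetime domain, the zero set of $u$ in $\R \times [t_1, t_2]$ is an analytic variety whose $s$-projection can be avoided by choosing $L$ sufficiently large (or generically), thereby reducing to the compact-interval version of Sturm's theorem where the first-touching argument goes through unchanged.
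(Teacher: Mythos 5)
Your core strategy coincides with the paper's: the paper's (one-sentence) proof also applies Sturm's theory, Proposition~\ref{prop: zeroset}, to $u=\th-c$, which solves the heat equation by Lemma~\ref{lem: angle equation}, and concludes that a zero of $u(\cdot,t)$ must persist for all $t<t_2$. Your version of this --- locating a first-touching point $(s^*,t^*)$, observing that it is automatically a multiple zero because $u(\cdot,t^*)\le 0$ forces $u_s(s^*,t^*)=0$, and then deriving a contradiction from Proposition~\ref{prop: zeroset}(b)(iii) against $u(\cdot,t^*-\delta)<0$ --- is a correct and in fact more explicit rendering of that idea, and you are right to single out the existence of $(s^*,t^*)$ on the non-compact domain $\R$ as the genuine difficulty (the paper's proof does not address it at all).

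However, the fix you propose for that difficulty does not work. First, solutions of $\th_t=\th_{ss}$ (a parabolic equation with merely smooth, time-dependent coefficients once one fixes a parametrization) are real-analytic in $s$ for each fixed $t$, but not jointly real-analytic in $(s,t)$, so the zero set of $u$ in $\R\times[t_1,t_2]$ is not an analytic variety. Second, and more fundamentally, even granting good structure the $s$-projection of $\bigcup_{\tau\in[t_1,t_2]}\{s: u(s,\tau)=0\}$ is typically a union of nontrivial intervals: a single zero moving continuously in time sweeps out an interval in $s$, and $u(\cdot,\tau)$ may have infinitely many zeros marching off to infinity (no finite-entropy hypothesis is in force here, and even with it, discreteness for each fixed $\tau$ does not bound the sweep). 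Hence neither ``$L$ sufficiently large'' nor ``$L$ generic'' yields the non-vanishing lateral boundary condition $u(\pm L,\tau)\neq 0$ for all $\tau\in[t_1,t_2]$ that Proposition~\ref{prop: zeroset} requires, and the reduction to the compact-interval Sturm theorem is not justified. To close the gap you need a genuinely different mechanism for ruling out escape of the first touching to spatial infinity --- e.g.\ a version of Sturm's theorem on unbounded domains (Angenent's Theorem~D, which needs a bound on $u$), a Phragm\'en--Lindel\"of-type maximum principle, or an a priori properness/decay statement for $\th-c$ near the ends --- none of which is supplied by the analyticity argument as written. (A smaller point: your exclusion of $u(\cdot,t^*)\equiv 0$ quietly invokes backward uniqueness to conclude that a flow which is a line at one time is the static line at all earlier times; the paper makes the same reduction, but it deserves a word.)
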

\begin{proof}
We may assume that $\cM$ is not a quasi-static line; otherwise, the claim holds trivially. Proposition~\ref{prop: zeroset} applied to the solution $\theta - c$ of the same heat equation in Lemma~\ref{lem: angle equation} implies that for any $c\in \mathcal{A}(t_2)$, the zero of $\th(\cdot, t) - c$ exists for all $t< t_2$. Then the claim follows.
\end{proof}

\subsection{Structure theorem for curve shortening flow with finite entropy}
In the rest of the paper, we assume that the ancient solution $\cM$ is defined on $(-\infty, 0]$. 
Let $\overline{M}_\tau = e^{\frac{\tau}{2}} M_{-e^{-\tau}}$ denote the rescaled flow. We have the following sheeting result for ``very-old" time-slices $\overline{M}_{\tau}$:

\begin{theorem}[Rough convergence, \cite{CSSZ24} Theorem~3.3]\label{thm: rough convergence}
Let $\cM$ be a smooth embedded ancient flow with entropy $\Lambda:= \ent[\cM]<\infty$ which is not a family of shrinking circles. Then $\Lambda = m\in\mathbb{N}$, and the following property holds for its rescaled flow $\overline{M}_\tau$$:$

Given $R \gg 1$ and $\varepsilon>0$, there are $T = T(\varepsilon, R)\ll -1$ and a smooth rotation function $S:(-\infty,T]\to SO(2)$ such that $S(\tau) \overline{M}_\tau \cap B_R(0)$ is a disjoint union of $m$ graphs, each of which is $\varepsilon$-close in $C^{\lfloor\frac{1}{\varepsilon}\rfloor}$-sense to the coordinate axis $\ell_{0} = \{(y,0): |y|<R\}$.
\end{theorem}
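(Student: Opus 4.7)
The plan is to study subsequential Brakke limits of the rescaled flow $\overline{M}_\tau = e^{\tau/2} M_{-e^{-\tau}}$ as $\tau \to -\infty$; these are the \emph{tangent flows at infinity}. By Huisken's monotonicity formula centered at the spacetime origin $(0,0)$, the Gaussian area $F_{0,-t}[M_t]$ is non-increasing in $t$ and bounded above by $\Lambda < \infty$, hence converges to some limit $\Lambda_\infty$. The monotonicity defect forces $\int_{M_t} |\boldsymbol{\kappa} + \tfrac{x^\perp}{2(-t)}|^2 \Phi_{(0,0)}\, ds$ to vanish in a time-averaged sense, so any such subsequential limit is a self-similarly shrinking Brakke flow.

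Next I would invoke the classification of smooth complete embedded self-shrinkers in $\R^2$: they are precisely the lines through the origin and the unit circle centered at the origin (the nontrivial closed Abresch--Langer examples are immersed but not embedded). Since $\cM$ is not a family of shrinking circles, Huisken's rigidity argument rules out the circle as a tangent flow at infinity (equality in the monotonicity formula would force $\cM$ itself to be a shrinking circle). Hence every subsequential tangent flow at infinity is supported on a union of lines through $0$ with positive integer multiplicities. Using embeddedness of $\overline{M}_\tau$ together with Brakke/White regularity, and observing that two distinct lines through $0$ would cross transversally at the origin, one shows that all these lines must in fact coincide; thus the tangent flow is a single line $\ell(\tau)$ through $0$ with multiplicity $m$, and $\Lambda = \Lambda_\infty = m \in \N$.

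Finally, upgrading Brakke convergence to smooth convergence on $B_R(0) \setminus B_{R^{-1}}(0)$ via White's local regularity theorem, for every $\tau \leq T(R,\varepsilon)$ the slice $\overline{M}_\tau \cap B_R(0)$ decomposes as a disjoint union of $m$ smooth arcs, each a graph over $\ell(\tau)$ with $C^{\lfloor 1/\varepsilon\rfloor}$-norm less than $\varepsilon$. Define $S(\tau) \in SO(2)$ to be the rotation taking $\ell(\tau)$ onto the fixed axis $\ell_0$; smoothness of $\tau \mapsto S(\tau)$ follows from the implicit function theorem applied to a natural selection of direction, for instance the tangent direction of one designated arc at its intersection with a chosen cross-section. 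The principal obstacle is the coincidence step in the second paragraph: ruling out that the tangent flow at infinity consists of several distinct lines through the origin (e.g.\ a ``cross'' configuration) ultimately rests on the combination of embeddedness of each $\overline{M}_\tau$ and Brakke/White regularity applied at the singular origin of the limit, and would genuinely fail in the non-embedded setting.
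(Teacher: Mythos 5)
First, a point of reference: the paper does not prove this statement at all --- it is imported verbatim from [CSSZ24, Theorem~3.3], so there is no in-paper proof to compare against, only a citation. Your blow-down strategy (Huisken monotonicity at the spacetime origin, subsequential self-shrinking Brakke limits, classification of the limit, then local regularity to upgrade to $m$ nearly-flat graphs) is the natural skeleton for such a result. But it has a genuine gap exactly where you flag one, namely the ``coincidence step,'' and the tools you propose there cannot close it.

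White's local regularity theorem applies only where the Gaussian density is close to $1$; at the intersection point of two distinct lines the density is at least $2$, so the theorem is silent precisely at the point where you need it. Embeddedness of the approximating curves does not obstruct varifold convergence to a transversally crossing configuration either: the embedded curves $\{xy=\varepsilon\}\cup\{xy=-\varepsilon\}$ converge to the coordinate cross as $\varepsilon\to 0$, so ``two lines would cross, contradicting embeddedness'' is not an argument. Your classification step is also too quick even before that: a Gaussian-stationary integral $1$-varifold of finite entropy need not be a union of complete Gaussian geodesics with multiplicity --- it can a priori be a geodesic net with junctions (balanced configurations of rays from the origin are Gaussian-stationary), and these must be excluded by a separate argument. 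Ruling out all of these configurations is the real content of the cited theorem, and it requires parabolic input about the ancient flow itself (Sturm-type zero/intersection counting and the structure of ancient paths of extrema of the squared-distance function, which the present paper quotes alongside this theorem), not just elliptic regularity of the limit. Two smaller omissions: the identity $\Lambda=\Lambda_\infty$ needs the interchange of the supremum over centers and scales with the limit $t\to-\infty$, which you should justify; and the smoothness of $\tau\mapsto S(\tau)$ deserves more than a one-line appeal to the implicit function theorem, though that part is routine once the sheeting structure is in hand.
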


A direct consequence of Theorem~\ref{thm: rough convergence} and Proposition~\ref{prop: ancient local-min-path/local-max-path} is the following:
\begin{proposition}[maximal number of local min/max points]\label{prop: finite number of minimum points} 
Under the assumptions of \ref{thm: rough convergence}, for any $x_0 \in \R^2$, $t\in I$, $\varphi^{(x_0)}(\cdot, t)$ has at most $m$ local minimum points. Furthermore, if $\mathbb{L} = \R$ $($resp. $\mathbb{L} = \mathbb{S}^1$$)$, then $\varphi^{(x_0)}(\cdot, t)$ has at most $m-1$ $($resp. $m$$)$ local maximum points.
\end{proposition}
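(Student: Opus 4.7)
The plan is to establish the bound on local minima by a contradiction argument carried out at very negative rescaled time, and then to derive the local-maximum bound from the alternating structure of strict critical points. Suppose $\varphi^{(x_0)}(\cdot, t_0)$ has $m+1$ strict local minima $p_{1,0},\ldots,p_{m+1,0}$. By Proposition~\ref{prop: ancient local-min-path/local-max-path}(a), each extends backward to a continuous ancient local-minimum path $\mathbf{p}_i(t)$ for $t\le t_0$. Part (c) of the same proposition shows that $(-t)^{-1}|\ga(\mathbf{p}_i(t))|^2$ is nondecreasing in $t$, so it is bounded by $(-t_0)^{-1}|\ga(\mathbf{p}_i(t_0))|^2$ for all $t\le t_0$. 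Hence the rescaled positions $\overline{\mathbf{p}}_i(\tau):=e^{\tau/2}\ga(\mathbf{p}_i(-e^{-\tau}))$ all lie in a fixed ball $B_{R_0}(0)$ for all relevant $\tau$.

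Next, fix $R>R_0$ and $\vare>0$ small. By Theorem~\ref{thm: rough convergence}, for $\tau$ sufficiently negative, $S(\tau)\overline{M}_\tau\cap B_R(0)$ consists of $m$ disjoint smooth graphs that are $C^2$-close to the horizontal axis $\ell_0$. A direct calculation (a contraction argument based on the $C^2$-smallness) shows that on each such nearly-straight graph, the squared-distance function from the near-origin point $e^{\tau/2}S(\tau)x_0$ is strictly convex, so it has a unique critical point in $B_R(0)$, which must be a local minimum. Consequently, the rescaled flow admits at most $m$ local minima of this distance function inside $B_R(0)$. By Sturm's theorem applied to $\pa_s\varphi^{(x_0)}$, which satisfies a linear parabolic equation of the form covered by Proposition~\ref{prop: zeroset}, any two paths $\mathbf{p}_i,\mathbf{p}_j$ can coincide only at the discrete set of times where $\pa_s\varphi^{(x_0)}(\cdot, t)$ has a multiple zero. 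Choosing $\tau$ very negative at which all paths are pairwise distinct, we obtain $m+1$ distinct local minima $S(\tau)\overline{\mathbf{p}}_i(\tau)\in B_{R_0}(0)\subset B_R(0)$, contradicting the bound of $m$.

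For the bound on local maxima, we use the alternating structure of strict local extrema. In the non-compact case $\mathbb{L}=\R$, the curve $\ga(\cdot, t_0)$ is properly embedded, so $\varphi^{(x_0)}(s, t_0)\to +\infty$ as $|s|\to\infty$; consequently its strict local extrema alternate and must begin and end with local minima, forcing $\#\{\text{local max}\}=\#\{\text{local min}\}-1\le m-1$. In the compact case $\mathbb{L}=\mathbb{S}^1$, cyclic alternation gives $\#\{\text{local max}\}=\#\{\text{local min}\}\le m$. The main obstacle is twofold: verifying uniqueness of the critical point of the squared-distance function on each nearly-straight graph from the $C^2$-smallness, and ensuring that the $m+1$ local-minimum paths are pairwise distinct at a suitable very-negative time, which reduces to the fact that multiple-zero times of $\pa_s\varphi^{(x_0)}$ are discrete.
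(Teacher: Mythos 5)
Your overall strategy is exactly what the paper intends: the paper offers no written proof, asserting the proposition as a ``direct consequence'' of Theorem~\ref{thm: rough convergence} and Proposition~\ref{prop: ancient local-min-path/local-max-path}, and your argument assembles precisely those two ingredients in the natural way. The confinement of the rescaled minimum points to a fixed ball via Proposition~\ref{prop: ancient local-min-path/local-max-path}(c), the strict convexity of the squared distance on each $\vare$-flat sheet (which follows most cleanly from \eqref{eq: derivatives of distance function}: $\varphi^{(x_0)}_{ss}=2+2\langle\ga-x_0,\boldsymbol{\kappa}\rangle\geq 2-O(R\vare)>0$ in the rescaled picture, with rescaled center $e^{\tau/2}x_0\to 0$), and the alternation count for maxima (using Lemma~\ref{lem: unbdd dist} to force $\varphi^{(x_0)}\to+\infty$ at both ends in the non-compact case) are all correct and are the right steps.

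The one place where your justification is off as literally stated is the distinctness of the $m+1$ backward paths. You claim two paths ``can coincide only at the discrete set of times where $\pa_s\varphi^{(x_0)}$ has a multiple zero.'' That is not quite right: if $\mathbf{p}_i(t^*)=\mathbf{p}_j(t^*)$ at a \emph{simple} zero of $\pa_s\varphi^{(x_0)}(\cdot,t^*)$, the implicit function theorem forces the two paths to agree on a whole neighborhood of $t^*$, so the coincidence set is a priori a union of intervals, not discrete, and your ``choose a generic very negative $\tau$'' step does not follow. The correct (and stronger) statement is that the paths can never collide at all for $t\leq t_0$: if $t_1<t_0$ were the supremum of collision times, the collision point would have to be a multiple zero of $u=\pa_s\varphi^{(x_0)}$ (else local uniqueness of the zero branch contradicts distinctness just above $t_1$), but Proposition~\ref{prop: zeroset}(b)(ii) then allows at most \emph{one} zero of $u(\cdot,t)$ near that point for $t$ slightly greater than $t_1$, whereas the two minimum points together with an intermediate maximum point supply at least two. (Alternatively, and perhaps closer to the spirit of Proposition~\ref{prop: ancient local-min-path/local-max-path}, one can interpose a local maximum path $\bq_i$ between consecutive minima and use the monotonicity \eqref{eq: monotone of local extremum} to see that $\varphi^{(x_0)}(\bq_i(t))>\varphi^{(x_0)}(\bp_j(t))$ for all $t\leq t_0$, so the ordering $\mathbf{p}_i<\mathbf{q}_i<\mathbf{p}_{i+1}$ persists by continuity.) You should also note that your Sturm application is to $u=\pa_s\varphi^{(x_0)}$, which satisfies $u_t=u_{ss}+\kappa^2u$ because of the commutator $[\pa_t,\pa_s]=\kappa^2\pa_s$; this is indeed of the form covered by Proposition~\ref{prop: zeroset}, but it is worth saying. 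With these repairs the proof is complete.
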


\section{Proof of main theorem}
\subsection{Finite total curvature implies finite entropy}

\begin{lemma}[entropy of graph with finite slope]\label{lem: entropy of graph}
    Let $f: (a, b)\rightarrow \R$ be a differentiable function satisfying $\lvert f'(x) \rvert \leq \eta$ for all $x\in (a, b)$ for some constant $\eta$. Then the entropy of the graph $y = f(x)$ is less than or equal to $\sqrt{1 + \eta^2}$.
\end{lemma}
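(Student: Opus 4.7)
The plan is to bound the Gaussian-weighted length functional $F_{x_0,\la}[\Gamma]$ uniformly in $(x_0,\la)$ by $\sqrt{1+\eta^2}$ and then take the supremum defining the entropy. The argument is a direct calculation using the slope bound to convert the arc-length measure into a bounded multiple of Lebesgue measure in $x$, plus the observation that the Gaussian factor in $F_{x_0,\la}$ only gets larger when we drop the vertical coordinate from $\lvert x - x_0\rvert^2$.

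First I would parametrize the graph by $x \in (a,b)$, so that the arc-length element satisfies
\begin{equation*}
    ds = \sqrt{1+f'(x)^2}\, dx \leq \sqrt{1+\eta^2}\, dx
\end{equation*}
by the hypothesis $|f'|\leq \eta$. Second, writing $x_0 = (x_0^1, x_0^2)$, I would use the trivial inequality $\lvert(x,f(x)) - x_0\rvert^2 \geq (x - x_0^1)^2$ to dominate the Gaussian factor by $e^{-(x-x_0^1)^2/(4\la)}$. Combining these two bounds yields
\begin{equation*}
    F_{x_0,\la}[\Gamma] \leq \frac{\sqrt{1+\eta^2}}{\sqrt{4\pi\la}}\int_a^b e^{-(x-x_0^1)^2/(4\la)}\, dx \leq \frac{\sqrt{1+\eta^2}}{\sqrt{4\pi\la}}\int_{-\infty}^{\infty} e^{-(x-x_0^1)^2/(4\la)}\, dx = \sqrt{1+\eta^2},
\end{equation*}
since the Gaussian integral evaluates to $\sqrt{4\pi\la}$. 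Taking the supremum over $x_0\in\R^2$ and $\la>0$ in \eqref{eq:Entropy} yields $\ent[\Gamma] \leq \sqrt{1+\eta^2}$.

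There is essentially no obstacle here: the whole argument is a one-line estimate plus an elementary Gaussian integral, and the bound $\sqrt{1+\eta^2}$ appears naturally as the upper bound on $\sqrt{1+f'(x)^2}$. The only thing worth flagging is that the inequality $\lvert(x,f(x)) - x_0\rvert^2 \geq (x - x_0^1)^2$ is completely lossless for the purpose of producing a uniform-in-$(x_0,\la)$ bound, because the worst case $x_0^2 = f(x)$ is approachable and the $x_0^1$-translation invariance of the resulting Gaussian integral then gives a sharp constant.
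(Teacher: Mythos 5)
Your proof is correct and is essentially the same as the paper's: both arguments bound the arc-length element by $\sqrt{1+\eta^2}\,dx$ and discard the (nonpositive) contribution of $f(x)^2$ to the Gaussian exponent, reducing to a one-dimensional Gaussian integral that evaluates to $1$. The only cosmetic difference is that the paper first extends $f$ to all of $\R$ and invokes translation invariance, whereas you simply enlarge the domain of integration from $(a,b)$ to $\R$, which is if anything slightly cleaner.
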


\begin{proof}
    We can always extend $f$ to $\R$ with $\lvert f'(x)\rvert \leq \eta$ everywhere. Denote by $\Gamma_f$ the graph of $y = f(x)$.
    Using the derivative bound, for all $\lambda > 0$
    \begin{align}
        F_{0,\lambda^2 }[\Gamma_f] = \tfrac{1}{\sqrt{2\pi}} \int_{-\infty}^\infty \exp\big(-\frac{x^2 + \lambda^{-2} f(\lambda x)^2}{4}\big)\, \sqrt{1 + \lvert f'(\lambda x) \rvert^2}\, dx \leq \sqrt{1 + \eta^2}.
    \end{align}
    In the last inequality, we simply throw away non-positive $-\lambda^{-2} f^2$ term in the exponent, so 
    the inequality is invariant under translation.
    Therefore, $\ent[\Gamma] \leq \sqrt{1+ \eta^2}$.
\end{proof}

\begin{lemma}\label{lem: ent bound}
    Suppose that $\Gamma$ is an immersed, connected, $C^2$ planar curve with total curvature $\int_\Gamma |\kappa|\, ds \leq \tfrac{\pi}{4}$.
    Then $\ent[\Gamma] \leq \sqrt{2}$.
\end{lemma}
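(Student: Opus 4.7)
The plan is to reduce this to Lemma~\ref{lem: entropy of graph} by showing that, under the hypothesis $\int_\Gamma |\kappa|\, ds \leq \pi/4$, the curve $\Gamma$ can (after a suitable rigid motion, which does not change the entropy) be written as the graph of a $C^1$ function with small slope. Since rotations and translations of $\R^{2}$ preserve $\ent$ (the supremum in \eqref{eq:Entropy} ranges over \emph{all} centers $x_{0}$ and scales), this reduction is legitimate.

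First I would parametrize $\Gamma$ by arc-length $s \in \mathbb{L}$ and pick a continuous angle function $\theta(s)$ with $\bt(s) = (\cos\theta(s), \sin\theta(s))$. Because $\theta_{s} = \kappa$, the fundamental theorem of calculus gives
\[
|\theta(s_{2}) - \theta(s_{1})| \;=\; \Bigl|\int_{s_{1}}^{s_{2}} \kappa\, ds\Bigr| \;\leq\; \int_{\Gamma} |\kappa|\, ds \;\leq\; \frac{\pi}{4}
\]
for all $s_{1},s_{2} \in \mathbb{L}$. Hence the image $\theta(\mathbb{L}) \subseteq \R$ is contained in some closed interval of length $\pi/4$. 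After rotating $\Gamma$ so that this interval becomes $[-\pi/8,\,\pi/8]$, the tangent vector satisfies $\cos\theta(s) \geq \cos(\pi/8) > 0$ for every $s$. In particular the first coordinate $x(s)$ of $\gamma(s)$ is strictly increasing in $s$, so $\Gamma$ is globally a graph $y = f(x)$ over its projection to the $x$-axis (an interval $(a,b) \subseteq \R$, allowing $a = -\infty$ or $b = +\infty$), with $f \in C^{1}$.

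From $f'(x) = \tan\theta(s(x))$ and $|\theta| \leq \pi/8$ we obtain the slope bound
\[
|f'(x)| \;\leq\; \tan(\pi/8) \;=\; \sqrt{2}-1 \;<\; 1.
\]
Applying Lemma~\ref{lem: entropy of graph} with $\eta = 1$ then yields $\ent[\Gamma] \leq \sqrt{1+1^{2}} = \sqrt{2}$, and invariance of entropy under the initial rotation finishes the proof.

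I do not expect any serious obstacle here: the only things to double-check are that a continuous single-valued $\theta$ exists (automatic for an immersed $C^{2}$ curve on a simply connected parameter domain; for $\mathbb{L} = \mathbb{S}^{1}$ one notes that the small total turning forces the rotation index to be $0$, so $\theta$ still lifts to a continuous function on the circle), and that the reduction to a graph is valid in the immersed setting — which it is, because the strict monotonicity of $x(s)$ rules out self-intersections along $\Gamma$.
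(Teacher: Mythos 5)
Your proposal is correct and follows essentially the same route as the paper: bound the oscillation of the angle function by the total curvature, rotate so that $\Gamma$ becomes a graph with slope at most $1$, and invoke Lemma~\ref{lem: entropy of graph}. The only cosmetic difference is that you center the angle range to get $|\theta|\leq\pi/8$ (hence slope $\tan(\pi/8)$), whereas the paper normalizes $\theta=0$ at an interior point to get $|\theta|\leq\pi/4$ and slope bound $1$; both yield $\ent[\Gamma]\leq\sqrt{2}$.
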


\begin{proof}
    Let $\gamma: (a,b) \rightarrow \R^2$ be an arc-length parametrization for $\Gamma$ where $(a, b)\ni 0$. 
    Since the entropy is invariant under rotation on $\R^2$, applying a suitable rotation we may assume that $\gamma(0) = P$ and $\theta(0) = 0$ for some interior point $P\in\Gamma$.
    From the assumption on total curvature, we readily see that $\lvert \theta(s)\rvert \leq \pi/4$. 
    Thus, $\Gamma$ can be expressed as a (part of) graph of an entire function $f(x)$ over the $x$-axis satisfying $\lvert f'(x)\rvert \leq 1$ everywhere. 
    By Lemma~\ref{lem: entropy of graph}, $\ent[\Gamma] \leq \sqrt{2}$.
\end{proof}

\begin{theorem}\label{thm: finite entropy}
    Suppose that $\Gamma$ is an immersed, connected, $C^2$ planar curve with total curvature $\int_\Gamma |\kappa|\, ds \leq \alpha \pi$ for some integer $\alpha < \infty$.
    Then $\ent[\Gamma] \leq 4\alpha \sqrt{2}$.
\end{theorem}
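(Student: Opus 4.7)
The plan is to subdivide $\Gamma$ into $4\alpha$ sub-arcs each of total curvature at most $\pi/4$, apply Lemma \ref{lem: ent bound} to every piece, and combine the resulting bounds via the obvious additivity of the Gaussian weighted length functional $F_{x_0, \lambda}$ over unions.

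For the decomposition step, let $\gamma : (a,b) \to \R^2$ be an arc-length parametrization of $\Gamma$ and consider the continuous, non-decreasing function
\begin{align*}
    F(s) := \int_a^s |\kappa(u)|\, du, \qquad \lim_{s \to b^-} F(s) \leq \alpha\pi.
\end{align*}
Setting $s_0 := a$, $s_{4\alpha} := b$, and
\begin{align*}
    s_i := \inf\bigl\{ s \in (a, b) : F(s) \geq i \pi / 4 \bigr\}, \qquad i = 1, \ldots, 4\alpha - 1,
\end{align*}
yields a partition with the property that each sub-arc $\Gamma_i := \gamma([s_{i-1}, s_i])$ is a connected, immersed, $C^2$ curve with $\int_{\Gamma_i} |\kappa|\, ds \leq \pi/4$ (the first $4\alpha-1$ pieces by construction, and the last one because $F(b^-) - (4\alpha-1)\pi/4 \leq \alpha\pi - (4\alpha-1)\pi/4 = \pi/4$). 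By Lemma \ref{lem: ent bound}, $\ent[\Gamma_i] \leq \sqrt{2}$ for every $i$.

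Finally, for any $x_0 \in \R^2$ and $\lambda > 0$, the defining integral for $F_{x_0,\lambda}$ is additive along the parametrization, hence
\begin{align*}
    F_{x_0,\lambda}[\Gamma] = \sum_{i=1}^{4\alpha} F_{x_0,\lambda}[\Gamma_i] \leq \sum_{i=1}^{4\alpha} \ent[\Gamma_i] \leq 4\alpha\sqrt{2}.
\end{align*}
Taking the supremum over $(x_0, \lambda)$ on the left yields $\ent[\Gamma] \leq 4\alpha\sqrt{2}$. With Lemma \ref{lem: ent bound} already in hand the argument is essentially pure bookkeeping; the one minor point to verify is that Lemma \ref{lem: ent bound} remains available on the closed sub-arcs $\Gamma_i$, which is immediate because its proof only uses a rotation of coordinates and the existence of a graph representation near an interior point $\gamma(s_*)$ with $s_* \in (s_{i-1}, s_i)$, neither of which is sensitive to the openness of the parameter interval.
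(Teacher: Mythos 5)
Your proposal is correct and follows essentially the same route as the paper: partition $\Gamma$ into $4\alpha$ connected sub-arcs each of total curvature at most $\pi/4$, apply Lemma~\ref{lem: ent bound} to each, and sum the $F_{x_0,\lambda}$ contributions before taking the supremum. The only difference is that you spell out the partition explicitly via the cumulative curvature function, which the paper leaves implicit.
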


\begin{proof}
    By the total curvature condition, we can find a partition of $\Gamma$ into $4\alpha$ connected components $\Gamma_1, \ldots, \Gamma_{4\alpha}$ such that $\int_{\Gamma_i} \lvert \kappa\rvert\, ds \leq \pi/4$ for all $i = 1, \ldots, 4\alpha$.
    Applying Lemma~\ref{lem: ent bound} to each component, for any $\lambda > 0$, $x_0\in \R^2$
    \begin{align}
        F_{x_0, \lambda}[\Gamma] \leq \sum_{i = 1}^{4\alpha} F_{x_0, \lambda}[\Gamma_i] \leq 4\alpha \sqrt{2}. 
    \end{align}
\end{proof}

\subsection{Finite entropy implies finite total curvature}
Despite the fact that the converse of Theorem~\ref{thm: finite entropy} is not true for arbitrary curves, in this subsection we show that the finite total curvature and the finite entropy conditions are equivalent for ancient complete embedded smooth CSF.

\begin{lemma}[surjectivity for intersection of normal lines near infinity]\label{lem: surjectivity}
    Let $0 < \delta <1$ and let $\gamma(s) : ( -2\delta, 2\delta ) \rightarrow \R^2$ be an arc-length parametrized $C^2$ curve so that $\kappa(0) = \kappa_0 > 0$ and 
    \begin{align}\label{eq: curv.bound}
        \tfrac12 \kappa_0 \leq \kappa(s) \leq 2\kappa_0 \quad \mbox{for all } \; s\in (-\delta , \delta).
    \end{align}
    Suppose $\bt(0) = \gamma'(0) = (1,0)$. Then there exists $R >0 $ depending only on $\kappa_0$ and $\gamma(0)$ such that for all $Y \geq R$, there is $s\in ( - \delta, \delta)$ such that the normal line at $\gamma(s)$ intersects $y$-axis at $(0, Y)$.
\end{lemma}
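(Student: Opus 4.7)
The plan is to reduce the problem to a one-dimensional root-finding question and then invoke the intermediate value theorem. A point $P \in \R^2$ lies on the normal line to $\gamma$ at $\gamma(s)$ if and only if $P - \gamma(s) \perp \bt(s)$, so setting $P = (0, Y)$ and defining
\[
F_Y(s) := \langle (0, Y) - \gamma(s),\, \bt(s) \rangle,
\]
the lemma reduces to showing that $F_Y$ has a zero in $(-\delta, \delta)$ for all sufficiently large $Y$. To exploit the curvature hypothesis I introduce the angle function $\theta(s) := \int_0^s \kappa(u)\,du$ so that $\bt(s) = (\cos\theta(s), \sin\theta(s))$, $\theta(0) = 0$, and the curvature bounds give $\tfrac12 \kappa_0 |s| \leq |\theta(s)| \leq 2\kappa_0 |s|$ on $(-\delta, \delta)$ with $\theta$ strictly increasing.

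The key step is to select an $s_0 \in (0, \delta)$ at which $\sin\theta(\pm s_0)$ has definite, nonzero sign. Choosing $s_0 := \tfrac12 \min\{\delta,\, \pi/(4\kappa_0)\}$ guarantees $\theta(s_0) \in [\kappa_0 s_0/2,\, \pi/4]$ and $\theta(-s_0) \in [-\pi/4,\, -\kappa_0 s_0/2]$, so that
\[
\sin\theta(s_0) \geq c_0 := \sin(\kappa_0 s_0 / 2) > 0, \qquad \sin\theta(-s_0) \leq -c_0,
\]
where $c_0$ depends only on $\kappa_0$ and $\delta$. Writing $\gamma = (x, y)$, the arc-length inequality $|\gamma(\pm s_0) - \gamma(0)| \leq s_0$ bounds $|x(\pm s_0)|$ and $|y(\pm s_0)|$ by a constant determined by $|\gamma(0)|$ and $s_0$. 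Plugging into the explicit expression $F_Y(s) = -x(s)\cos\theta(s) + (Y - y(s))\sin\theta(s)$ then shows that $F_Y(s_0) \to +\infty$ and $F_Y(-s_0) \to -\infty$ as $Y \to +\infty$ at a linear rate controlled by $c_0$. Consequently, there exists $R$ depending only on $\kappa_0$, $\gamma(0)$ (and $\delta$) such that $F_Y(s_0) > 0 > F_Y(-s_0)$ for all $Y \geq R$, so the intermediate value theorem produces $s \in (-s_0, s_0) \subset (-\delta, \delta)$ with $F_Y(s) = 0$.

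The only subtlety is ensuring the angle $\theta(\pm s_0)$ stays strictly inside $(-\pi/2, \pi/2)$, so that $\sin\theta(\pm s_0)$ does not vanish or change sign; this is precisely where the upper curvature bound $\kappa \leq 2\kappa_0$ enters through the choice of $s_0$ above. Beyond this bookkeeping, the argument is elementary, and I do not anticipate any serious obstacle.
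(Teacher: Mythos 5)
Your proof is correct and follows essentially the same route as the paper: both reduce the problem to the orthogonality equation $-x(s)\cos\theta(s) + (Y - y(s))\sin\theta(s) = 0$, derive the angle bounds $\tfrac12\kappa_0\lvert s\rvert \le \lvert\theta(s)\rvert \le 2\kappa_0\lvert s\rvert$ from the curvature hypothesis, and conclude by the intermediate value theorem. The only (cosmetic, and arguably cleaner) difference is that you apply the IVT in $s$ at a fixed large $Y$, detecting a sign change of $F_Y$ between $-s_0$ and $s_0$, which treats all positions of $\gamma(0)$ uniformly; the paper instead solves for $Y = y(s) + x(s)\cot\theta(s)$ and argues surjectivity of this function of $s$ onto $[R,\infty)$, which forces a case split on the sign of $x(0)$ and a separate remark for $x(0)=0$.
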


\begin{proof}
    Let $\gamma(s) = (x(s), y(s)) \in \R^2$ and let $\theta(s)$ denote the smooth angle function of the unit tangent vector $\bt(s)$ measured from the positive $x$-direction counterclockwisely so that $\theta(0) = 0$. We can write $\bt(s) = (\cos\theta(s), \sin \theta(s))$. The normal line at $\gamma(s)$ passes through $(0, Y)$ if and only if
    \begin{align}\label{eq: intersection}
        0 = \big( (0, Y) - \gamma(s) \big) \cdot \bt(s) = -x(s) \cos \theta(s) + (Y - y(s))\sin \theta(s).
    \end{align}

    Let $\gamma(0) = (x_0, y_0)$. We may assume that $x_0 \neq 0$; otherwise, \eqref{eq: intersection} holds trivially for all $Y$ with $s = 0$. We may WLOG assume that $x_0 >0$. Then, the solution of \eqref{eq: intersection} is given by
    \begin{align}\label{eq: intercept}
        Y = y(s) + x(s)\cot \theta(s).
    \end{align}
    We only focus on \emph{positive} intercept $Y$. From \eqref{eq: curv.bound} and $\theta(0) = 0$, for $0 \in (0, \delta]$, 
    \begin{align}\label{eq: angle.bound}
        \tfrac12 \kappa_0 s \leq \theta(s) = \int_0^s \kappa(s)\, ds \leq 2 \kappa_0 s.
    \end{align}
    We can make $\delta$ smaller such that $\kappa_0 \delta < \tfrac{\pi}{4}$. Using $\gamma(s) = \gamma(0) + \int_0^s \bt(s)\, ds$ and $\delta < 1$, for all $s \in (0, \delta]$,
    $x_0 \leq x(s) \leq x_0 + 1$ and $y_0 \leq y(s) \leq y_0 + 1$.
    In particular, when $s = \delta$, 
    \begin{align}\label{eq: R}
        Y\leq (y_0+ 1) + (x_0 + 1) \cot(\tfrac12 \kappa_0\delta) =: R.
    \end{align}
    Furthermore, by \eqref{eq: intercept} and \eqref{eq: angle.bound} as $s \to 0^+$ we find
    \begin{align}\label{eq: limit of Y}
        Y \geq y_0 + x_0 \cot (2\kappa_0 s) \to +\infty.
    \end{align}
    Combining \eqref{eq: R} and \eqref{eq: limit of Y} together with continuity, for all $Y \geq R$, there exists $s\in (0, \delta]$ that solves \eqref{eq: intercept}. Finally, if $x_0 < 0$, we can apply an analogue argument for $s\in [-\delta, 0)$.
\end{proof}

\begin{lemma}\label{lem: unbdd dist}
    Let $\Gamma\subset \R^2$ be a complete non-compact embedded curve with finite entropy, and let $\gamma : \R\rightarrow \Gamma$ be an arc-length parametrized embedding. 
    Then for any $x_0 \in \R^2$, $\lvert \gamma(s) - x_0 \rvert \to \infty$ as $s \to \pm \infty$.
\end{lemma}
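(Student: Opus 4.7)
The plan is to combine the finite-entropy hypothesis with the fact that $\gamma$ is arc-length parametrized on all of $\R$. First I would observe that for any $x_0 \in \R^2$ and $R > 0$, applying the entropy bound at scale $\lambda = R^2$ gives
\[
    \ent[\Gamma] \geq F_{x_0, R^2}[\Gamma] \geq \frac{e^{-1/4}}{2\sqrt{\pi}\, R}\, \mathrm{length}(\Gamma \cap B_R(x_0)),
\]
so the length of $\Gamma$ in any ball---hence in any annulus---centered at $x_0$ is finite.

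Next, I would argue by contradiction. Suppose $|\gamma(s) - x_0| \not\to \infty$ as $s \to +\infty$ (the case $s \to -\infty$ is identical). Then there exist $R > 0$ and a sequence $s_n \to +\infty$ with $\gamma(s_n) \in B_R(x_0)$. Since $\gamma$ is arc-length parametrized on $\R$, the arc length of $\gamma|_{[s_1, \infty)}$ is infinite, while embeddedness (which makes $\gamma$ injective) together with the first step shows that its image inside $B_{2R}(x_0)$ has finite total length.

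I would then finish by a dichotomy. Either $\gamma(s) \in B_{2R}(x_0)$ for all sufficiently large $s$, in which case $\gamma|_{[s^*, \infty)}$ has infinite arc length entirely inside $B_{2R}(x_0)$, contradicting the length bound; or there exist arbitrarily large $s$ with $\gamma(s) \notin B_{2R}(x_0)$. In the latter case, together with $\gamma(s_n) \in B_R(x_0)$ for $s_n \to +\infty$, the curve must make infinitely many excursions that start in $B_R(x_0)$ and leave $B_{2R}(x_0)$; each such excursion contributes arc length at least $R$ to the annulus $B_{2R}(x_0) \setminus B_R(x_0)$ (the radial separation of the two bounding circles), so $\Gamma$ has infinite length in that annulus, again contradicting the first step.

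The main technical point to be careful about is the last dichotomy: one must verify that the two cases exhaust all possibilities and that each excursion truly contributes at least $R$ to the annulus length. Both verifications are elementary once one uses that a continuous curve passing from the interior ball to the exterior of the larger ball traverses the annulus of radial width $R$, so the substantive analytic input is entirely in the finite-length bound derived from the entropy; the rest is a clean combinatorial argument.
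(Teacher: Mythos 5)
Your proof is correct, and it reaches the contradiction by a somewhat different route than the paper. Both arguments rest on the same analytic input---finite entropy forces the length of $\Gamma$ in bounded regions to be finite---but the implementations differ. You first isolate this as a general estimate, $\ent[\Gamma] \geq F_{x_0,R^2}[\Gamma] \geq \tfrac{e^{-1/4}}{2\sqrt{\pi}\,R}\,\mathrm{length}(\Gamma\cap B_R(x_0))$ (the constant checks out), and then run a dichotomy with an annulus-crossing count to manufacture infinite length in a fixed ball. The paper is more direct: it passes to a subsequence with $\gamma(s_i)$ converging to a point $\bp$ and $s_{i+1}-s_i\geq 2$, so the disjoint parameter intervals $[s_i-1,s_i+1]$ give infinitely many unit-scale segments all lying in $B_2(\bp)$, and one simply lower-bounds $F_{\bp,1}[\Gamma]$ by summing over them---no dichotomy or intermediate-value argument is needed, since the recurrence points themselves already carry a definite amount of nearby arc length. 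Your excursion argument in the second case is sound (interleaving the $s_{n_k}$ with exit times produces disjoint parameter intervals each traversing the annulus of radial width $R$), but note it could be bypassed entirely: after thinning the sequence so that $s_{n+1}-s_n\geq 2R$, the segments $\gamma([s_n-R,s_n+R])$ already lie in $B_{2R}(x_0)$ and give infinite length there, which is essentially the paper's shortcut. What your version buys is the reusable statement that finite entropy bounds $\mathrm{length}(\Gamma\cap B_R(x_0))$ linearly in $R$, which is of independent interest; what the paper's version buys is brevity.
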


\begin{proof}
    By a proper translation of $\Gamma$, we can always assume that $x_0$ is the origin.
    Suppose that the claim is not true as $s \to \infty$; that is, there exist $R>0$ and a sequence $s_i \to \infty$ such that $\lvert \gamma(s_i) \rvert \leq R$.
    By compactness of closed ball $\bar{B}_R$, after passing to a subsequence and relabeling, we may further assume that $\lvert \gamma(s_j) - \bp \rvert \leq 1$ for some $\bp\in \bar{B}_R$ and  $s_{i+1} - s_i\geq 2$ for all $i = 1, 2, \ldots$. 
    Note that the curved segments $\gamma\big([s_i - 1, s_i +1]\big)$ are mutually disjoint due to embeddedness and all have intrinsic length equal to $2$.
    Furthermore, for any $s\in [s_i - 1, s_i +1]$, by triangle inequality,
    \begin{align}
        \lvert \gamma(s) - \bp \rvert \leq \lvert \gamma(s) - \gamma(s_i)\rvert + \lvert\gamma(s_i) - \bp\rvert \leq 1 + 1 = 2.
    \end{align}
    Therefore,
    \begin{align*}
        \ent[\Gamma] \geq F_{\bp, 1}[\Gamma] \geq \sum_{i = 1}^\infty \int_{s_i - 1}^{s_i +1} \tfrac{1}{\sqrt{2\pi}} e^{-\frac{\lvert \gamma(s) - \bp \rvert^2 }{4}} \, ds \geq \sum_{i = 1}^\infty \tfrac{e^{-1}}{\sqrt{2\pi}}  \int_{s_i - 1}^{s_i +1}\, ds = \sum_{i = 1}^\infty \tfrac{e^{-1}}{\sqrt{2\pi}}\cdot 2 = \infty,
    \end{align*}
    which contradicts the finite entropy condition. For the case $s \to-\infty$, we just change the orientation of $\gamma$.
\end{proof}

\begin{proposition}[bounded number of parallel normal lines]\label{prop: bounded parallel tangent/normal lines}
Suppose that $\cM = \{M_{t}\}_{t\in I}$ is an ancient complete non-compact $($resp. closed $)$ embedded smooth solution to CSF with $\ent(\cM) = m < \infty$. Then
\begin{enumerate}
    \item[$(a)$] For all $t\in I$, there are at most $m-1$ $($resp. $m$ $)$ distinct points on $M_t$ having nonzero parallel curvature vectors with the same sign.
    \item[$(b)$] For all $t\in I$, there are at most $2m - 1$ $($resp. $2m + 1$ $)$ distinct points on $M_t$ having parallel normal lines.
    \item[$(c)$] For all $t\in I$, for all $\xi \in \R$, $\th(\cdot, t) \equiv \xi \mod \pi$ has at most $2m - 1$ $($resp. $2m + 1$ $)$ solutions.
\end{enumerate}
\end{proposition}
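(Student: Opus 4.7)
\emph{Plan.} The three parts are closely linked: (c) is a rewriting of (b), since $\theta(p)\equiv\xi\pmod\pi$ is equivalent to $\bt(p)=\pm(\cos\xi,\sin\xi)$, i.e., to the normal line at $p$ being parallel to $(-\sin\xi,\cos\xi)$. For both (a) and (b) I would exhibit a center $x_0\in\R^2$ for which $\varphi^{(x_0)}(\cdot,t_0)$ has too many critical points of a prescribed type, violating Proposition~\ref{prop: finite number of minimum points}.

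\emph{Part (a).} Given $N$ points $p_1,\ldots,p_N\in M_{t_0}$ whose nonzero curvature vectors are all in the direction $+\bn_0$, each $p_i$ is a strict local minimum of the projection $y(s):=\langle\gamma(s,t_0),\bn_0\rangle$: indeed $y'(p_i)=\langle\bt(p_i),\bn_0\rangle=0$ because $\bt(p_i)\perp\bn_0$, and $y''(p_i)=\langle\boldsymbol{\kappa}(p_i),\bn_0\rangle>0$. I pick pairwise disjoint closed parameter neighborhoods $U_i\ni p_i$ with $\delta_i:=\min_{\partial U_i}(y-y(p_i))>0$, and set $x_0:=L\bn_0$ for $L\gg 1$. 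From the expansion
\[
\varphi^{(x_0)}(s,t_0)=L^2+2t_0-2L\,y(s)+|\gamma(s,t_0)|^2,
\]
the $-2Ly$ term dominates once $L$ is large, so $\varphi^{(x_0)}(p_i)>\varphi^{(x_0)}(s)$ for all $s\in\partial U_i$. Hence $\varphi^{(x_0)}\big|_{\bar U_i}$ attains its maximum in the interior of $U_i$, producing $N$ distinct local maxima; Proposition~\ref{prop: finite number of minimum points} gives $N\leq m-1$ (non-compact) or $N\leq m$ (compact).

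\emph{Parts (b), (c).} After rotating so the common tangent direction is $e_1$ and the common normal direction is $e_2$, I classify each parallel-tangent point $p_i$ as type A ($\boldsymbol{\kappa}(p_i)$ in $+e_2$), type B (in $-e_2$), or type C (inflection, $\kappa(p_i)=0$). For a non-inflection $p_i$ with $x(p_i)\neq 0$, the proof technique of Lemma~\ref{lem: surjectivity}---showing that the intercept $Y=y(s)+x(s)\cot\theta(s)$ tends to $\pm\infty$ from the two sides of $p_i$---produces, for any $|Y|$ sufficiently large and for both signs, a unique nearby point $q_i^\pm$ whose normal line passes through $x_0^\pm=(0,\pm Y)$, making $q_i^\pm$ a critical point of $\varphi^{(x_0^\pm)}(\cdot,t_0)$. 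For a non-inflection $p_i$ with $x(p_i)=0$, and similarly for any inflection with $x(p_i)=0$, the point $p_i$ sits on the $y$-axis and is therefore itself a critical point of $\varphi^{(0,Y)}$ for every $Y$. For an inflection with $x(p_i)\neq 0$, a Taylor expansion of $\theta$ near $p_i$ gives either $0$ or $2$ nearby critical points of $\varphi^{(0,Y)}$, depending on the order of the first nonvanishing derivative of $\theta$ at $p_i$, the sign of $x(p_i)$, and the sign of $Y$. A case-by-case check then shows that every single $p_i$ contributes exactly $2$ to the double count
\[
\#\mathrm{crit}(\varphi^{(x_0^+)})+\#\mathrm{crit}(\varphi^{(x_0^-)}),
\]
once $|Y|$ is large and generic enough that all contributed critical points are distinct and $\varphi^{(x_0^\pm)}$ are Morse. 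By Proposition~\ref{prop: finite number of minimum points}, each summand is at most $(m-1)+m=2m-1$ (non-compact) or $m+m=2m$ (compact), hence $2N\leq 4m-2$ yields $N\leq 2m-1$ (non-compact), and $2N\leq 4m$ yields $N\leq 2m\leq 2m+1$ in the compact case.

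\emph{Main obstacle.} The crucial invariant---``each $p_i$ contributes exactly $2$ to the sum''---requires a careful case analysis of inflections, partitioned by the first nonvanishing derivative of $\theta$ at $p_i$ and the sign of $x(p_i)$. Smoothness and unique continuation for the ancient CSF guarantee that every inflection is of finite order, so the analysis terminates in finitely many cases; the distinctness of the critical points coming from different $p_i$'s is then secured by taking $|Y|$ large enough to confine each $q_i^\pm$ to an arbitrarily small neighborhood of $p_i$.
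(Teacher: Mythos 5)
Your part (a) is correct and is a genuinely different, arguably more elementary route than the paper's: the paper sends the center $x_0=(0,Y)$ to infinity along a \emph{common intersection point of normal lines} (via Lemma~\ref{lem: surjectivity}) and then uses Lemma~\ref{lem: unbdd dist} to interlace local minima, whereas you place $x_0=L\bn_0$ far in the common curvature direction and extract a local maximum of $\varphi^{(x_0)}$ inside each of $N$ disjoint neighborhoods by the dominance of the linear term $-2Ly$. This avoids both auxiliary lemmas and directly contradicts the bound on local maxima in Proposition~\ref{prop: finite number of minimum points}; it is a clean argument.

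For (b)/(c), however, you have missed the paper's key idea and replaced it with an argument that has real gaps. The paper reduces (c) to (a) by a \emph{time perturbation}: since $\theta-\xi$ solves the heat equation \eqref{eq: heat.angle}, Proposition~\ref{prop: zeroset} and Corollary~\ref{cor: almost no multiple zero} let one move to a slightly earlier time at which the $2m$ zeros persist and are all simple, i.e.\ $\kappa\neq 0$ there; pigeonholing then gives $m$ points with the same curvature sign, contradicting (a). This completely eliminates inflection points from the discussion. Your direct double-counting over the two centers $(0,\pm Y)$ must instead handle inflections and degenerate critical points head-on, and two steps are asserted rather than proved. First, the bound $\#\mathrm{crit}(\varphi^{(0,\pm Y)})\leq (m-1)+m$ requires every critical point to be a local extremum (Proposition~\ref{prop: finite number of minimum points} only counts local maxima and minima, not critical points of type $s\mapsto s^3$); your appeal to choosing $Y$ ``generic enough that $\varphi^{(x_0^\pm)}$ are Morse'' is exactly the nontrivial point, since the evolute of $M_t$ can meet the $y$-axis in a large set, in particular near infinity when an inflection has a nearly vertical normal. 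Second, the claim that every inflection has finite order via ``unique continuation'' needs spatial analyticity of the time slices (and fails outright for the static line, where $\theta\equiv\xi$ has infinitely many solutions — an edge case your case analysis cannot absorb, and which must be excluded at the outset as the paper implicitly does by requiring $\theta-\xi$ to be a nontrivial solution). The ``case-by-case check'' establishing that each $p_i$ contributes exactly two distinct critical points is also left undone. I would recommend replacing your (b)/(c) argument with the Sturm-theoretic reduction to (a).
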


\begin{proof}
    We first consider the non-compact case. Suppose (a) were false. Up to a proper rotation and a change of orientation, we may assume that there exist $q_1 < q_2 <\ldots < q_m$ for some $t\in I$ at which the unit normal vector $\bn(q_i, t) = ( 1, 0)$ and curvature $\kappa( q_i, t) > 0$ for all $i = 1, \ldots, m$. 
    Applying Lemma~\ref{lem: surjectivity} to all $q_1, \ldots, q_m$, there exists $R>0$ depending on the position $\gamma$ and curvature around these points so that for any $Y \geq R$, for each $q_i$ we can find $q_i'$ near $q_i$, at which the normal line to $M_t$ passing through the common intersection point $(0, Y)$. 
    According to Lemma~\ref{lem: center of distance}, by choosing $Y$ even larger (depending on the curvature at all $q_i'$), the squared-distance function $\varphi^{(x_0)}$ with respect to the center $x_0 := (0, Y)$ defined on $M_t$ achieves strict local maximums at all $q_i'$.
    By Lemma~\ref{lem: unbdd dist}, $\varphi^{(x_0)}(s)$ diverges to $+\infty$ as $s\to \pm \infty$. There must be $m$ distinct local minimum points $p_1, \ldots, p_m$ of $\varphi^{(x_0)}(s)$ such that $p_i < q_i < p_{i+1}$ for $i = 1, \ldots, m-1$. This contradicts Proposition~\ref{prop: finite number of minimum points}.

    Note (b) and (c) are equivalent, so it suffices to show (c). Suppose there are distinct $s_1, \ldots, s_{2m}$ zeros of $\theta(s_i, t) = \xi \mod \pi$ for some $\xi$ and $t\in I$. Recall that $\theta(s,t)$ satisfies the heat equation in Lemma~\ref{lem: angle equation}. 
    By Proposition~\ref{prop: zeroset} and Corollary~\ref{cor: almost no multiple zero}, by perturbing $t$ backward slightly, we still have $2m$ distinct zeros $s_1', \ldots, s_{2m}'$ of the equation $\theta(\cdot, t) = \xi \mod \pi$ and $\kappa(s_i', t) = \theta_s(s_i', t) \neq 0$ for all $i = 1, \ldots, 2m$. At least $m$ of $s_i$'s have the same sign of curvature. This leads to a scenario that contradicts (a).

    For the closed case, the only difference is the maximal number of local maximum points in Proposition~\ref{prop: finite number of minimum points} and the proof proceeds similarly as the non-compact case.
\end{proof}

\begin{theorem}[bounded total curvature]\label{thm: bounded total curvature}
Suppose that $\cM = \{M_{t}\}_{t\in I}$ is an ancient complete non-compact $($resp. closed $)$ embedded smooth solution to CSF with $\ent(\cM) = m < \infty$. Then
\begin{equation}
    \sup_{t\in I}\int_{M_t} |\ka|\, ds \leq (2m - 1)\pi \quad  (\mbox{resp.}\; (2m + 1)\pi ).
\end{equation}
\end{theorem}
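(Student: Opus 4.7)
The strategy is to express the total curvature of $M_t$ as an integral of preimage counts for the angle function $\theta$, and then bound that integral using Proposition~\ref{prop: bounded parallel tangent/normal lines}(c). Since $\theta_s = \kappa$ by the computation in Lemma~\ref{lem: angle equation}, the total curvature coincides with the total variation of $\theta(\cdot, t)$, so everything will be extracted from careful bookkeeping of level sets of $\theta$.

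For the non-compact case, I would apply the Banach indicatrix (one-dimensional co-area) formula on any bounded arc-length interval $[-N, N]$:
\[
    \int_{-N}^N |\theta_s(s,t)|\, ds = \int_{\mathbb{R}} \#\{s \in [-N, N] : \theta(s,t) = \xi\}\, d\xi,
\]
and then reorganize the right-hand side by grouping together $\xi$'s with the same residue mod $\pi$. Translating each interval $[k\pi, (k+1)\pi)$ back to $[0, \pi)$ and summing yields
\[
    \int_0^{\pi} \#\{s \in [-N, N] : \theta(s,t) \equiv \xi \bmod \pi\}\, d\xi.
\]
By Proposition~\ref{prop: bounded parallel tangent/normal lines}(c) the integrand is bounded by $2m - 1$ uniformly in $\xi$ and $t$, so the whole expression is at most $(2m-1)\pi$. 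Sending $N \to \infty$ and noting that the bound is independent of $t$ gives the non-compact assertion.

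The closed case proceeds in the same way: parametrize $M_t \cong \mathbb{S}^1$ by arc-length on a fundamental domain $[0, L(t))$, lift $\theta$ continuously along this interval (it increases by $2\pi$ times the rotation index over one loop), and rerun the co-area and mod $\pi$ rearrangement. The preimage count in Proposition~\ref{prop: bounded parallel tangent/normal lines}(c) is now $2m + 1$, so the same computation outputs $(2m+1)\pi$. I do not expect any substantial obstacle; the only technical steps are justifying the Banach indicatrix formula and the rearrangement by mod $\pi$ residues, and both follow from standard real analysis once one observes that the preimage counts are finite by Proposition~\ref{prop: bounded parallel tangent/normal lines}(c).
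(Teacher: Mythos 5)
Your proposal is correct and follows essentially the same route as the paper's proof: the paper also applies the area (Banach indicatrix) formula to $\theta(\cdot,t)$, regroups the level-set counts by residue mod $\pi$ via Tonelli's theorem, and invokes Proposition~\ref{prop: bounded parallel tangent/normal lines}(c) to bound the integrand by $2m-1$ (resp.\ $2m+1$). The only cosmetic differences are your truncation to $[-N,N]$ and your continuous lift of $\theta$ in the closed case, where the paper instead uses a single-valued angle function with a $2\pi$ jump.
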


\begin{proof}
We first prove the result in the non-compact case. For any fixed $t\in I$, applying area formula to the angle function $\th(\cdot, t): \R \to \R$, we have
\begin{equation*}\label{eq: area formula}
    \int_\R |\th_s(s, t)| \, ds = \int_\R \cH^0 \big( \th(\cdot, t)^{-1}(\xi) \big) d\xi,
\end{equation*}
where $\cH^0$ is the counting measure. 
Using $\th_s(\cdot, t) = \ka(\cdot, t)$ and Tonelli's theorem, we get
\begin{align*}
    \int_{M_t}|\ka|\, ds &= \int_\R \cH^0 \big( \th(\cdot, t)^{-1}(\xi) \big)\, d\xi
    = \sum_{k = -\infty}^\infty \int_{k\pi}^{(k+1)\pi} \cH^0\big(\th(\cdot, t)^{-1}(\xi)\big) d\xi\\
    &= \int_{0}^{\pi} \sum_{k = -\infty}^\infty  \cH^0\big(\th(\cdot, t)^{-1}(\xi + k\pi)\big) d\xi
    \leq (2m - 1) \pi.
\end{align*}
In the last inequality, we use Proposition \ref{prop: bounded parallel tangent/normal lines} (c).

Note that the above proof does not really need a globally defined smooth angle function, since we only used the (mod $ \pi$)-angle in Proposition~\ref{prop: bounded parallel tangent/normal lines}. In closed case, we can find a (discontinuous) angle function $\theta : [0, \ell) \rightarrow \R$ on $M_t$ parametrized by arclength having a jump of value $2\pi$ at $s= 0$, where $\ell$ is the length of $M_t$. The proof proceeds similarly as the non-compact case.
\end{proof}

\begin{proof}[Proof of Theorem~\ref{thm: equivalence}]
    Combining Theorem~\ref{thm: finite entropy} and Theorem~\ref{thm: bounded total curvature} completes the proof.
\end{proof}

\begin{corollary}[uniform bound for the angle]\label{cor: bound of angle}
    Suppose that $\cM = \{M_{t}\}_{t\in I}$ is an ancient complete \emph{non-compact} embedded smooth solution to CSF with $\ent(\cM) = m < \infty$ and suppose that $\theta(0, t_0)\in [0, 2\pi)$ for some $t_0\in I$. 
    Then $\vert\theta(s, t)\rvert \leq (2m + 1)\pi$ for all $s\in \R, t\in I$.
\end{corollary}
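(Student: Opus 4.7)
My plan is to reduce the corollary to the two facts already in hand: the time-wise total curvature bound $\int_{M_t}|\kappa|\, ds \leq (2m-1)\pi$ from Theorem~\ref{thm: bounded total curvature}, and the monotonicity of the angle range $\mathcal{A}(t)$ from Proposition~\ref{prop: finite range of angle}. The strategy is to first establish the bound at the reference time $t_0$, then push it forward in time using monotonicity, and finally push it backward by using $\theta(0,t_0)$ as an anchor that remains inside the enlarged backward ranges.

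At the reference time, since $\theta_s = \kappa$, for every $s \in \R$
\begin{align*}
    |\theta(s, t_0) - \theta(0, t_0)| = \left\lvert \int_0^s \kappa(\sigma, t_0)\, d\sigma \right\rvert \leq \int_{M_{t_0}} |\kappa|\, ds \leq (2m-1)\pi.
\end{align*}
Combining this with the normalization $\theta(0, t_0) \in [0, 2\pi)$ yields $|\theta(s, t_0)| \leq (2m+1)\pi$, and in particular $\mathcal{A}(t_0) \subseteq [-(2m-1)\pi, (2m+1)\pi]$.

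For $t > t_0$, Proposition~\ref{prop: finite range of angle} gives $\mathcal{A}(t) \subseteq \mathcal{A}(t_0)$, so the bound transfers immediately. For $t < t_0$ the range may grow, so I would argue indirectly: by the same monotonicity, $\theta(0, t_0) \in \mathcal{A}(t_0) \subseteq \mathcal{A}(t)$; and $\mathcal{A}(t)$, being the continuous image of the connected set $\R$, is itself an interval whose diameter is bounded by the total variation $\int_{M_t}|\theta_s|\, ds = \int_{M_t}|\kappa|\, ds \leq (2m-1)\pi$. Hence $\mathcal{A}(t)$ lies in the interval of radius $(2m-1)\pi$ around $\theta(0, t_0)$, which is contained in $[-(2m-1)\pi, (2m+1)\pi]$, giving the same bound.

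Since each ingredient is already available, I do not anticipate a serious obstacle. The only delicate observation is that for backward times one should anchor the argument at the fixed value $\theta(0, t_0)$ rather than $\theta(0, t)$ (which we have no a priori control over); the fact that this anchor stays inside the enlarged range $\mathcal{A}(t)$ is exactly the content of the monotonicity of $\mathcal{A}$.
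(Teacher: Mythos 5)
Your proposal is correct and follows essentially the same route as the paper: anchor the angle at $\theta_0=\theta(0,t_0)$ for backward times via the range monotonicity of Proposition~\ref{prop: finite range of angle}, and bound the oscillation of $\theta(\cdot,t)$ by the total curvature bound $(2m-1)\pi$ of Theorem~\ref{thm: bounded total curvature}, yielding $2\pi+(2m-1)\pi=(2m+1)\pi$. Your treatment of the case $t>t_0$ is just a slightly more explicit version of the paper's one-line reduction to $t<t_0$.
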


\begin{proof}
    Let $\theta_0 = \theta(0, t_0)$. By Proposition~\ref{prop: finite range of angle}, it suffices to show that $\theta$ is bounded for $t < t_0$ and for any $t < t_0$, there exists $s_t$ such that $\theta(s_t, t) = \theta_0$. Then Theorem~\ref{thm: bounded total curvature} implies that for any $s$,
    \begin{align*}
        \lvert \theta(s,t) \rvert = \left\lvert \theta_0 + \int_{s_t}^s \kappa\, ds \right\rvert \leq 2\pi + (2m - 1) \pi = (2m + 1)\pi.
    \end{align*}

\end{proof}

\section{Application: uniqueness of tangent flow at infinity for non-compact curves}

We prove a uniqueness result for tangent flow at infinity, making use of the well-definedness of the angle function $\theta$ in the complete non-compact case. Note that by Corollary~\ref{cor: bound of angle}, $\theta$ is uniformly bounded in terms of the entropy, so the result is exactly the same as Neves' result \cite{Nev07} and Lambert--Lotay--Schulze's \cite{LLS} in the $1$-dimensional case. We remark that our uniqueness result below does not provide a convergence rate, whereas the uniqueness result in \cite[Theorem~1.2]{CSSZ24} gives an exponential convergence of the rescaled flow $\overline{M}_{\tau}$ as $\tau\to-\infty$.

\begin{theorem}[unique tangent flow at infinity]\label{thm:main.unique}
An ancient finite-entropy smooth complete non-compact curve shortening flow embedded in $\mathbb{R}^2$ has a unique tangent flow at infinity. Moreover, the tangent flow at infinity is a straight line with multiplicity $m\geq 2$, unless the flow is a static line.
\end{theorem}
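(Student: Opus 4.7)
The plan is to adapt Neves' uniqueness-of-tangent-cone strategy for Lagrangian mean curvature flow to the one-dimensional setting, using the uniform bound on the angle function from Corollary~\ref{cor: bound of angle} as the key input that is generally unavailable. By Theorem~\ref{thm: rough convergence}, any subsequential blow-down limit of $\overline{M}_\tau$ as $\tau\to-\infty$ is a straight line $\ell$ with multiplicity $m$, so it suffices to show that the direction of $\ell$ (viewed in $\R/\pi\Z$) does not depend on the chosen subsequence.

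The main technical step is a weighted monotonicity: for every non-negative convex $f\in C^2(\R)$, I would establish
\begin{align*}
    \frac{d}{dt}\int_{M_t} f(\theta)\,\Phi_{(0,0)}\,ds \;=\; -\int_{M_t} f''(\theta)\,\theta_s^2\,\Phi\,ds \;-\; \int_{M_t} f(\theta)\,\bigl\lvert\boldsymbol{\kappa}+\tfrac{x^\perp}{2(-t)}\bigr\rvert^2\,\Phi\,ds \;\leq\; 0.
\end{align*}
The derivation combines Lemma~\ref{lem: angle equation} (namely $\theta_t=\theta_{ss}$), two integrations by parts whose boundary terms at infinity vanish by the Gaussian decay of $\Phi$ together with the bound on $\theta$, and the pointwise Huisken identity $\partial_s^2\Phi+\partial_t\Phi+\kappa\mathbf{n}\cdot\nabla\Phi-\kappa^2\Phi=-\lvert\boldsymbol{\kappa}+x^\perp/(2(-t))\rvert^2\,\Phi$ along the flow. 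Since $\lvert\theta\rvert\leq(2m+1)\pi$ by Corollary~\ref{cor: bound of angle} and $\int\Phi\,d\mu\leq m$, each integral $\int f(\theta)\Phi\,d\mu$ is uniformly bounded above and hence admits a finite limit $L[f]$ as $t\to-\infty$, independent of the sequence chosen.

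Given any $\tau_j\to-\infty$, Theorem~\ref{thm: rough convergence} produces rotations $S(\tau_j)\in SO(2)$ with $S(\tau_j)\overline{M}_{\tau_j}\to m\cdot\ell_0$ locally smoothly; let $\alpha_j$ be the angle of $S(\tau_j)^{-1}(e_1)$. Local smooth convergence combined with the uniform bound on $\theta$ yields asymptotic values $\bar\theta_i^{(j)}\equiv\alpha_j\pmod{\pi}$ of $\theta$ on each of the $m$ sheets; after a further subsequence $\bar\theta_i^{(j)}\to\bar\theta_i^*$ with $\bar\theta_i^*\equiv\alpha_\infty\pmod{\pi}$. Since the Gaussian mass carried by each sheet tends to $1$, dominated convergence gives
\begin{align*}
    L[f]=\sum_{i=1}^{m}f(\bar\theta_i^*).
\end{align*}
Taking $f(\theta)=(\theta-c)^{2k}$ for all $c\in\R$ and $k\geq 1$, the method of moments (valid because $\{\bar\theta_i^*\}$ lies in a fixed compact interval) determines the discrete measure $\sum_{i=1}^m\delta_{\bar\theta_i^*}$ uniquely, independent of the subsequence. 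Because all $\bar\theta_i^*$ share the same residue modulo $\pi$---namely $\alpha_\infty$---this common residue, i.e.\ the direction of $\ell$, is subsequence-independent, so the tangent flow at infinity is unique. The dichotomy in the statement follows from the fact that entropy one forces a smooth embedded ancient curve shortening flow to be a line.

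The hard part will be justifying the pointwise Huisken identity together with the two integrations by parts in the complete non-compact setting, making sure that boundary terms at infinity truly vanish despite the possibility of a rapidly winding angle function; once the family of monotonicities is rigorously in place, the conclusion via moment matching is an essentially formal consequence of the uniform $\theta$-bound and Theorem~\ref{thm: rough convergence}.
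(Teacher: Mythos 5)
Your proposal is correct and rests on the same two pillars as the paper's proof: the uniform angle bound of Corollary~\ref{cor: bound of angle} and the weighted monotonicity formula applied to convex functions of $\theta$ (both use the identity $(\partial_t-\partial_s^2)f(\theta)=-f''(\theta)\theta_s^2$, which for $f(\theta)=\theta^2$ is exactly \eqref{eq: heat.angle2}). The difference is in the endgame. The paper uses the \emph{single} function $f(\theta)=\theta^2$: via Theorem~\ref{thm: rough convergence} it identifies $\tfrac{1}{\sqrt{2\pi}}\int_{\overline{M}_\tau}\theta^2 e^{-|x|^2/4}\,ds$ with $\sum_{i=1}^m\bigl(S(\tau)+a_i\pi\bigr)^2$ up to $o(1)$, where the integers $a_i$ are \emph{fixed} for all $\tau\le T$, and then reads off convergence of $S(\tau)$ from the convergence of this strictly convex quadratic together with continuity of $S$. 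You instead run the full Neves-style moment argument: monotonicity for the whole family $f(\theta)=(\theta-c)^{2k}$ produces subsequence-independent limits $L[f]$, which by compact support of the limiting angle set determine the discrete measure $\sum_i\delta_{\bar\theta_i^*}$ and hence the common residue $\alpha_\infty \bmod \pi$. Your route is slightly heavier (it needs the moment-determinacy step and a diagonal argument in $R$ and the subsequence to justify $L[f]=\sum_i f(\bar\theta_i^*)$), but it buys robustness: it does not require the sheet labels $a_i$ to be constant in $\tau$ nor the continuity of $S(\tau)$, only subsequential compactness, and it recovers the full asymptotic angle set $\{\bar\theta_i^*\}$ rather than just the direction. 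The analytic issue you flag --- justifying the integration by parts and Huisken's identity on a complete non-compact curve --- is handled in the paper by citing Ecker's weighted monotonicity formula, and the finite-entropy and uniform $\theta$-bounds make the boundary terms vanish exactly as you anticipate; there is no gap there.
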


\begin{proof}
    To show the uniqueness of tangent flow at infinity, from rough convergence theorem, it suffices to show that the rotation function $S(\tau)$ converges as $\tau \to -\infty$.
    Let $\theta$ be a choice of angle function defined in Subsection~\ref{sec: 2.3} such that $\theta(0, t_0) \in [0, 2\pi)$ for some $t_0 \in I$.
    Suppose that $\ent[\cM] = m$.
    By Corollary~\ref{cor: bound of angle}, the angle function $\lvert \theta(s, t)\rvert$ is uniformly bounded by $(2m + 1)\pi$ for all $t\in I$ and $s\in \R$.
    Let $\vare > 0$.
    By rough convergence theorem~\ref{thm: rough convergence}, there exist $R \gg 0$, $T \ll -1$, and some fixed $a_1, \ldots, a_m\in \mathbb{Z}$ such that for all $\tau \leq T$, for all $i$,
    \begin{align}\label{eq: angle of sheet}
        \left\lvert \tfrac{1}{\sqrt{2\pi}} \int_{\Sigma_\tau^i \cap B_R} \theta^2 e^{-\frac{\lvert x \rvert^2}{4}} \, ds - \big(S(\tau) + a_i \pi\big)^2 \right\rvert < \vare.
    \end{align}
    where $\Sigma_\tau^1, \ldots, \Sigma_\tau^m$ are the connected components of $\overline{M}_\tau \cap B_R$.
    Here, we identify the smooth rotation function $S(\tau)$ with a real-valued angle function.
    From \cite[Lemma 2.4]{CIM15}, $F_{0, 1}[\overline{M}_\tau \setminus B_R] \leq Cm e^{-\frac{R^2}{8}}$ for some numerical constant $C$. 
    As a result, 
    \begin{align}\label{eq: angle outside}
        \left\lvert \tfrac{1}{\sqrt{2\pi}} \int_{\overline{M}_\tau \setminus B_R} \theta^2 e^{-\frac{\lvert x \rvert^2}{4}} \, ds \right\rvert \leq C m (2m +1)^2 \pi^2  e^{-\frac{R^2}{8}},
    \end{align}
    which can be made arbitrarily small by choosing a sufficiently large $R$. 
    Combining \eqref{eq: angle of sheet} and \eqref{eq: angle outside} gives
    \begin{align}\label{eq: limit identity}
        \lim_{\tau \to -\infty} \left\lvert \tfrac{1}{\sqrt{2\pi}} \int_{\overline{M}_\tau} \theta^2 e^{-\frac{\lvert x \rvert^2}{4}} \, ds - \sum_{i = 1}^{m} \big(S(\tau) + a_i \pi\big)^2 \right\rvert = 0.
    \end{align}
    By weighted monotonicity formula \cite[Theorem 4.13]{Eck2004RTM} and Lemma~\ref{lem: angle equation} \eqref{eq: heat.angle2},
    \begin{align*}
        \frac{d}{d t}\int_{M_t} \theta^2 \Phi_{(0, 0)} \, ds &= \int_{M_t} \Big[(\tfrac{\pa}{\pa t} - \tfrac{\pa^2}{\pa s^2})\theta^2 - \Big\lvert \kappa + \frac{x^\perp}{2t}\Big\rvert^2\theta^2\Big] \Phi_{(0, 0)}\, dt\\
        &= -\int_{M_t} \Big[2\kappa^2 + \Big\lvert \kappa + \frac{x^\perp}{2t}\Big\rvert^2\theta^2\Big] \Phi_{(0, 0)}\, dt \leq 0.
    \end{align*}
    It follows from the above monotonicity, uniform bounds of angle and entropy, and \eqref{eq: limit identity} that the limit
    \begin{align*}
        \lim_{t\to -\infty}\int_{M_t} \theta^2 \Phi_{(0, 0)} \, ds = \lim_{\tau\to -\infty} \tfrac{1}{\sqrt{2\pi}} \int_{\overline{M}_\tau} \theta^2 e^{-\frac{\lvert x \rvert^2}{4}} \, ds = \lim_{\tau \to -\infty} \sum_{i = 1}^m (S(\tau) + a_i \pi)^2
    \end{align*}
    exists. This implies that $\lim_{\tau \to -\infty} S(\tau)$ exists.
\end{proof}

\appendix
\section{The entropy bound for the logarithmic spiral}

The converse of Theorem~\ref{thm: finite entropy} is generally not true for $C^2$ embedded curves in $\R^2$. For example, by Lemma~\ref{lem: entropy of graph} the graph of $y = \sin(x)$ has finite entropy but infinite total curvature. Another interesting example is the logarithmic spiral \cite[Logarithmic\_spiral]{Wiki}, which is self-similar: a scaled logarithmic spiral is congruent to the original curve by a rotation. 
Given real constants $a > 0$ and $k\neq 0$, the logarithmic spiral  can be written as 
    \begin{align}
        S_{a, k } := \{(r\cos\varphi, r\sin \varphi)\in \R^2\, :\, r = a e^{k\varphi}, \varphi\in \R\}. 
    \end{align}
    Since the pitch angle of $S_{a,k}$ is a constant $\alpha = \arctan k$, the angle function $\theta$ defined in Section~\ref{sec: 2.3} at point $(r\cos\varphi, r\sin \varphi)$ is given by $\varphi + \pi/2 - \alpha$. 
    Then unbounded $\theta$ implies that the total curvature of $S_{a,k}$ is infinite.
    For any $0\leq r_1 < r_2$, the length of $S_{a, k} \cap (B_{r_2}(0) \setminus B_{r_1}(0))$ is given by 
    \begin{align}\label{eq: arclength.log}
        L[S_{a, k} \cap (B_{r_2}(0) \setminus B_{r_1}(0))] = \frac{r_2 - r_1}{\lvert \sin \alpha \rvert}.
    \end{align}
    Note that $S_{a, k}\cap B_1(0)$ has finite length, which implies that the logarithmic spiral $S_{a, k}$ is not a complete curve. 
    Finally, we will show that the entropy of $S_{a,k}$ is finite.

\begin{proposition}[entropy bound of logarithmic spiral]
    Let $a >0 ,k \neq 0$, $\alpha = \arctan k$, and let $S_{a,k}$ be defined as above.
    Then $\ent[S_{a,k}] \leq  1/\lvert \sin \alpha \rvert < \infty $.
\end{proposition}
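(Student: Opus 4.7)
The plan is to bound $F_{x_0,\lambda}[S_{a,k}]$ uniformly in $x_0 \in \R^2$ and $\lambda > 0$ by $1/|\sin\alpha|$, and then take the supremum over $(x_0, \lambda)$. Two features of the setup do all the work: (i) the Gaussian weight $e^{-|x-x_0|^2/(4\lambda)}$ depends only on the distance $|x - x_0|$, and (ii) the logarithmic spiral $S_{a,k}$ admits a monotone parametrization by the radial coordinate $r(x) = |x|$, which converts a weighted arclength integral into a one-dimensional Gaussian integral.

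The first step is to remove the angular dependence in $x - x_0$ by the reverse triangle inequality $|x - x_0| \geq \bigl||x| - |x_0|\bigr|$, yielding the pointwise estimate
\[
    e^{-|x-x_0|^2/(4\lambda)} \leq e^{-(|x| - |x_0|)^2 / (4\lambda)} \qquad \text{for every } x \in \R^2.
\]
Next, since $r = ae^{k\varphi}$ with $k \neq 0$ is strictly monotone in $\varphi$, the map $r : S_{a,k} \to (0, \infty)$ is a bijection, and \eqref{eq: arclength.log} gives the infinitesimal identity $ds = dr / |\sin\alpha|$. Changing variables, I obtain
\[
    F_{x_0,\lambda}[S_{a,k}] \leq \frac{1}{\sqrt{4\pi\lambda}} \cdot \frac{1}{|\sin\alpha|} \int_0^\infty e^{-(r - |x_0|)^2/(4\lambda)}\, dr.
\]

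Finally, I would enlarge the range of integration from $(0, \infty)$ to all of $\R$, which upper-bounds the remaining integral by $\sqrt{4\pi\lambda}$. The scale factors cancel, giving $F_{x_0,\lambda}[S_{a,k}] \leq 1/|\sin\alpha|$ uniformly in $x_0$ and $\lambda$, and hence $\ent[S_{a,k}] \leq 1/|\sin\alpha|$ after taking the supremum. There is no substantive obstacle here: the whole argument is one change of variables, forced by the compatibility between the rotational symmetry of the Gaussian weight and the self-similar radial structure of $S_{a,k}$. The only slack is in the last step of extending the radial domain to $\R$, which is wasteful exactly when $|x_0|$ is small compared to $\sqrt{\lambda}$; this explains the gap between the stated bound $1/|\sin\alpha|$ and the value $F_{0,\lambda}[S_{a,k}] = 1/(2|\sin\alpha|)$ attained at the center of symmetry.
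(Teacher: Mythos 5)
Your proposal is correct and follows essentially the same route as the paper's own proof: the reverse triangle inequality to reduce the Gaussian weight to a function of $r=|x|$ alone, the radial change of variables $ds = dr/|\sin\alpha|$ from \eqref{eq: arclength.log}, and the extension of the resulting one-dimensional Gaussian integral to all of $\R$. The closing observation that the slack comes only from enlarging the radial domain, with $F_{0,\lambda}[S_{a,k}] = 1/(2|\sin\alpha|)$ at the center, is a correct and pleasant addition not in the paper.
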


\begin{proof}
    Let $x_0 \in \R^2$ and $\lambda > 0$. 
    By \eqref{eq: arclength.log}, the arclength element is $ds = \frac{dr}{\lvert \sin \alpha \rvert}$ in 
    the paramaterization by $r$.
    Using triangle inequality and the substitution $u = (r - \lvert x_0 \rvert)/\lambda$,
    \begin{align*}
        F_{x_0, \lambda^2}[S_{a,k}] &= 
        \frac{1}{\sqrt{2\lambda^2\pi}} \int_{0}^\infty \exp\big(-\tfrac{\lvert x - x_0\rvert^2}{4\lambda^2}\big) \, \frac{d r}{\lvert \sin\alpha\rvert} \\
        &\leq \frac{1}{\lvert \sin \alpha \rvert \sqrt{2\lambda^2 \pi}} \int_{0}^{\infty} \exp\big(-\tfrac{( r - \lvert x_0\rvert )^2}{4\lambda^2}\big)\, d r \\
        &= \frac{1}{\lvert \sin \alpha \rvert \sqrt{2\pi}} \int_{-\lvert x_0 \rvert / \lambda}^{\infty} \exp\big( -\tfrac{u^2}{4}\big)\, du\\
        &\leq \frac{1}{ \lvert \sin \alpha \rvert}.
    \end{align*}
    Thus, $\ent[S_{a,k}] \leq  1/\lvert \sin \alpha \rvert$.
\end{proof}

\bibliographystyle{amsplain} 
\bibliography{CSF}

\end{document}